\newtheorem{thm}{Theorem}[section]
\newtheorem{lem}[thm]{Lemma}
\newcommand{\Z}{{\mathbb Z}}
\newcommand{\N}{{\mathbb N}}
\newcommand{\hex}{\mathop{Hex}}
\DeclareMathOperator{\sy}{sys}
\DeclareMathOperator{\sr}{SR}
\DeclareMathOperator{\area}{area}
\DeclareMathOperator{\girth}{girth}
\DeclareMathOperator{\genus}{genus}
\DeclareMathOperator{\cat}{CAT}
\newcommand{\cG}{{\mathcal G}}
\newcommand{\cT}{{\mathcal T}}
\newcommand{\cH}{{\mathcal H}}
\title{Translation surfaces with large systoles}
\author{Peter Buser, Eran Makover and Bjoern Muetzel}
\begin{document}
\maketitle
\begin{abstract}
In this paper we continue to investigate the systolic landscape of translation surfaces started in \cite{chms}. We show that there is an infinite sequence of surfaces $(S_{g_k})_k$ of genus $g_k$, where $g_k \to \infty$ with large systoles. On the other hand we show that for hyperelliptic surfaces we can find a suitable homology basis, where a large number of loops that induce the basis are short.    \\
\\
Keywords: translation surfaces, systoles, maximal surfaces, hyperelliptic involution.\\
Mathematics Subject Classification (2020): 30F10, 32G15 and 53C22.
\end{abstract}

\section{Introduction}
A \textit{translation surface} $S$ is a connected surface obtained by identifying pairwise by translations the sides of a set of plane polygons. It follows that $S$ is a closed flat surface with singularities or cone points, where a cone point $p$ has angle $2\pi \cdot (k+1)$ for $k 
\in \N$ (\cite{ma},\cite{wr} and \cite{zo}). 

Translation surfaces are $\cat(0)$ spaces, where the cone points can be seen as points of "concentrated" negative curvature. Therefore these surfaces share many properties of surfaces with non-positive curvature and especially the well-studied hyperbolic surfaces (see, for example, \cite{ak}, \cite{am1},  \cite{ba}, \cite{ge}, \cite{pa1}, \cite{ra}, \cite{sc1} and \cite{sc2}). In this article we are interested in short curves on general and hyperelliptic  translation surfaces. The leitmotif is to carry over the corresponding results for compact hyperbolic surfaces to the category of translation surfaces.

More precisely, we are interested in the \textit{systole} of a translation surface $S$ which is a shortest simple closed geodesic. We denote by $\sy(S)$ its length. To make this a scaling invariant we have to normalize by the area $\area(S)$ of $S$ to obtain $\sr(S) = \frac{\sy(S)^2}{\area(S)}$, the \textit{systolic ratio} of $S$. Let 
\[
     \sr^{tr}(g) = \sup \{ \frac{\sy(S)^2}{\area(S)} \mid S \text{ translation surface of genus } g \}
\]
be the \textit{supremal systolic ratio in genus} $g$. The systolic ratio in genus $g$ can only be of order $ \frac{\log^2(g)}{g}$ (see \cite{am2}, \cite{ks}).  This estimate is valid for any smooth Riemannian surface. It also applies in the case of translation surfaces, as any translation surface can be approximated by smooth Riemannian surfaces. 

Concerning this invariant little is known in the case of translation surfaces. Preceeding this article the best lower bound known to the authors was of order $\frac{1}{g}$ (see \cite{chms}). Furthermore for fixed genus only the genus one case is clear. In the case of flat tori $\sr^{tr}(1) = \frac{2}{\sqrt{3}}$. In this case the maximal surface is the equilateral torus which has a hexagonal lattice. In genus two Judge and Parlier conjecture in \cite{jp} that the surface $\hex_2$ obtained by gluing parallel sides of two isometric regular hexagons is maximal. It is furthermore known that for fixed genus the supremal systolic ratio is a maximum (see \cite{chms}). We call a surface $S_{max}$ \textit{maximal}, if $\sr^{tr}(g)$ is attained in this surface. Maximal surfaces have the following interesting property:

\begin{thm}[\cite{chms}] Let $S_{max}$ be a maximal translation surface of genus $g \geq 1$. Then every simple closed geodesic that does not run through a cone point is intersected by a systole of $S_{max}$. 
\label{thm:char_Smax}
\end{thm}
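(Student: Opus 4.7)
\emph{Proof plan.} I argue by contradiction, assuming that $S_{max}$ admits a simple closed geodesic $\gamma$ that does not pass through any cone point and is disjoint from every systole. Write $\sy := \sy(S_{max})$. The plan is to perform a local surgery of the flat metric that strictly decreases the area while keeping the systole length $\geq \sy$, contradicting the maximality of $\sr(S_{max})$.

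Since $\gamma$ avoids the cone points, the family of closed geodesics parallel to $\gamma$ foliates a maximal flat cylinder $C \subset S_{max}$ of width $w>0$ and circumference $\ell = L(\gamma)$, containing $\gamma$ in its interior and with all adjacent cone points lying on $\partial C$. For small $\epsilon >0$, let $C_\epsilon$ be the flat cylinder of width $w-\epsilon$ and circumference $\ell$; its two boundary components are isometric to those of $C$, so gluing $C_\epsilon$ into $S_{max}\setminus C$ via the same identifications that glued $C$ yields a translation surface $S_\epsilon$ of the same genus, with $\area(S_\epsilon) = \area(S_{max}) - \epsilon \ell$. The natural homeomorphism $\Phi_\epsilon : S_{max}\to S_\epsilon$ (identity off $C$, rescaling of the width coordinate inside $C$) induces a bijection on free homotopy classes of simple closed curves.

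The key step is to verify that $\sy(S_\epsilon)\geq \sy$ for $\epsilon$ small enough. For every simple closed geodesic $\alpha$ on $S_{max}$ I would compare $L(\alpha)$ with the length $L_\epsilon(\alpha)$ of the geodesic representative of $\Phi_\epsilon(\alpha)$ on $S_\epsilon$. If $\alpha\cap\gamma = \emptyset$, then $\alpha$ lies in $S_{max}\setminus C$ or is parallel to $\gamma$ inside $C$, and in either case $L_\epsilon(\alpha) = L(\alpha)\geq \sy$. If $\alpha$ meets $\gamma$ in $k\geq 1$ points, then $\alpha$ makes $k$ disjoint full-width traversals of $C$, each a straight segment of length $\sqrt{w^2+s_j^2}$; after replacing $w$ by $w-\epsilon$ one gets $L_\epsilon(\alpha)\geq L(\alpha) - k\epsilon$. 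Combined with the trivial bound $L(\alpha)\geq kw$ this yields the multiplicative estimate $L_\epsilon(\alpha)\geq L(\alpha)(1-\epsilon/w)$. By hypothesis every such $\alpha$ satisfies $L(\alpha) > \sy$ strictly. Using that a translation surface carries only finitely many simple closed geodesics of length at most $2\sy$, one extracts a uniform gap $\delta > 0$ with $L(\alpha)\geq \sy+\delta$ for the short $\alpha$ meeting $\gamma$; the multiplicative estimate takes care of $L(\alpha) > 2\sy$. Taking $\epsilon$ sufficiently small thus preserves the systole length, so $\sr(S_\epsilon) > \sr(S_{max})$, contradicting maximality.

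The main obstacle is precisely this uniform control over infinitely many homotopy classes after the surgery. The Euclidean comparison in each slab of $C$ is elementary in isolation, but one must combine the strict-gap argument for short competing geodesics (via the finiteness of short saddle connection configurations) with the multiplicative bound for long ones to rule out a discontinuous collapse of the infimum as $\epsilon\to 0$.
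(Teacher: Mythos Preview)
The paper does not contain a proof of this theorem: it is quoted from \cite{chms} in the introduction as a known result and is not revisited later, so there is no in-paper argument to compare yours against.

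That said, your contradiction via cylinder shrinking is the natural route and is essentially correct. The case analysis is sound: a straight segment entering the flat cylinder $C$ transversally must exit through the opposite boundary component and hence crosses $\gamma$, so any closed geodesic disjoint from $\gamma$ is either disjoint from $C$ or a core curve of $C$, and in either case its length is unchanged by the surgery. For geodesics that do cross $\gamma$, combining the gap argument for those of length at most $2\sy$ (on a translation surface the set of cylinder circumferences below any fixed bound is finite) with the multiplicative Lipschitz bound $L_\epsilon(\alpha) \geq (1-\epsilon/w)\,L(\alpha)$ for the longer ones is exactly what is needed to get a uniform~$\epsilon$. One small suggestion: rather than phrasing things via a bijection of free homotopy classes and the image $\Phi_\epsilon(\alpha)$, it is cleaner to start from an arbitrary closed geodesic $\beta$ on $S_\epsilon$ and pull it back by the $\tfrac{w}{w-\epsilon}$-Lipschitz map $\Phi_\epsilon^{-1}$ to compare with the geodesic representative of its class on $S_{max}$; this makes the direction of the inequalities transparent.
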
 
Concerning the lower bound of $\sr^{tr}(g)$ we show in this article in Section 2:
\begin{thm}[Surfaces with large systoles] There exist, for infinitely many $g \in \N$ translation surfaces $S_g$ of genus $g$, such that  
\[
            \frac{  \log(g)}{ g \cdot \log(\log(g)) } \leq \sr(S_g)  \leq   \sr^{tr}(g). 
\]
\label{thm:sr_low_intro}
\end{thm}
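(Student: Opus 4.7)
The plan is to construct $S_g$ as a Galois cover of a fixed low-genus base translation surface, where the deck group is chosen from a family of finite groups whose Cayley graphs have large girth. This is the translation-surface analogue of the classical Buser--Sarnak construction of hyperbolic surfaces with large systoles.

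First I would fix a base translation surface $T_0$ (for instance $\hex_2$, or a flat torus with one marked cone point) together with a bounded symmetric generating set $\gamma_1,\ldots,\gamma_m$ of $\pi_1(T_0 \setminus \Sigma)$, where $\Sigma$ is the set of cone points; each $\gamma_i$ is realised by a short geodesic arc of length at most some fixed $L_0$. Here one wants the base surface to be flexible enough that its own short loops do not force short loops on every cover, and the length and number of generators must be controlled since they enter multiplicatively in the final estimate.

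Second, I would pick a sequence of finite groups $G_k$ of order $N_k \to \infty$, each equipped with a generating set $A_k=\{a_{k,1},\ldots,a_{k,m}\}$, such that the Cayley graph $\mathrm{Cay}(G_k,A_k)$ has girth at least $c\,\log(N_k)/\log\log(N_k)$. Such families are classical in combinatorial group theory; explicit candidates include quotients of arithmetic groups such as $\mathrm{PSL}_2(\mathbb F_p)$, or probabilistic constructions with random generators. I would then form $S_k$ as the Galois cover of $T_0$ corresponding to the surjection $\pi_1(T_0 \setminus \Sigma) \twoheadrightarrow G_k$ sending $\gamma_i \mapsto a_{k,i}$. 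By Riemann--Hurwitz the genus satisfies $g_k := g(S_k) \asymp N_k$, while $\area(S_k) = N_k\cdot \area(T_0)$.

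Finally I would lower bound $\sy(S_k)$ as follows. Any simple closed geodesic $\alpha\subset S_k$ projects to a loop $\bar\alpha$ on $T_0$ whose homotopy class lies in $\ker(\pi_1(T_0\setminus \Sigma)\to G_k)$. Using a suitable fundamental-domain or Delaunay-type decomposition of $T_0$ adapted to the short generators, one may replace $\bar\alpha$ by a word in the $\gamma_i$'s of comparable length (up to a multiplicative constant depending only on $T_0$); the Cayley-graph girth then forces this word, and hence $\alpha$, to be long, giving $\sy(S_k) \gtrsim \log(N_k)/\log\log(N_k)$. Combining with $\area(S_k) \asymp g_k$ yields a systolic ratio of at least the claimed order $\log(g_k)/(g_k\log\log(g_k))$.

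The principal obstacle is the combinatorial-to-geometric translation in the last step: unlike in the hyperbolic case, translation-surface geodesics can pass arbitrarily close to cone points, can change direction there, and need not decompose canonically into arcs of the chosen generators. Controlling the discrepancy between a geodesic and its symbolic word---while keeping constants independent of $k$---is the delicate point, and the $\log\log(g)$ loss in the stated bound most likely reflects the trade-off between the achievable girth of $\mathrm{Cay}(G_k,A_k)$ and the number of generators $m$ that a short geodesic basis of $T_0$ can provide.
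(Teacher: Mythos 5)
Your strategy is genuinely different from the paper's. The paper does not use Galois covers of a fixed base at all. Instead it builds a family of ``thick vertex'' surfaces by hand: a flat torus $T$ of side roughly $2L$ is pierced with $k = 2L^2$ slits of length $L$ (each slit being $2/L$ away from the boundary of its rectangular cell), and copies of $T$ are glued along slits according to a $k$-regular graph $\Gamma$ of girth $l$. Any simple closed geodesic on the resulting surface $S$ either stays essentially inside one copy of $T$ --- in which case an explicit lemma on essential curves of the slit torus forces length $\geq 2L$ --- or it projects to a non-trivial cycle in $\Gamma$, in which case it must cross at least $\mathrm{girth}(\Gamma)$ slit-to-slit arcs, each of length $\geq 2/L$. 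Choosing $l = L^2$ balances the two cases and gives $\mathrm{sys}(S) = 2L$ exactly, after which the Erd\H{o}s--Sachs graphs $\Gamma_{2L^2,L^2}$ with at most $(2L^2-1)^{L^2}$ vertices give the genus bound and the $\log / \log\log$ shape of the final estimate. What the paper buys with this construction is precisely the thing your proposal leaves open: the decomposition of the surface into vertices and edges of $\Gamma$ is \emph{built into the geometry} (via the slits and the $2/L$ buffer between them), so the dictionary between geodesics and edge-paths is immediate, with no quasi-isometry constants to control.

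The gap in your proposal is the ``combinatorial-to-geometric translation'' step, and you are right that it is the delicate point; but it is more than delicate --- as stated it is not established, and it is exactly the step the paper's construction is designed to avoid. For a Galois cover of a fixed translation base $T_0$, a closed geodesic on $S_k$ can pass arbitrarily near a cone point and turn there, so the projected loop on $T_0$ does not canonically decompose into generators; showing that a length-$\ell$ geodesic yields a word of length $O(\ell)$ in a fixed generating set requires a quantitative Milnor--\v{S}varc/fundamental-domain argument on a flat surface with singularities, together with care that the resulting word in $G_k$ is \emph{non-reducible} (the girth bounds non-trivial cycles, not arbitrary words that happen to be trivial). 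None of this is done. Moreover, your heuristic explanation of the $\log\log g$ loss is off: in your set-up the generating set is fixed, so a Cayley graph of girth $\sim \log N_k$ would, if the translation step worked, give $\mathrm{sys}(S_k) \gtrsim \log g_k$ and hence $\mathrm{SR}(S_k) \gtrsim \log^2(g)/g$ --- stronger than the theorem. In the paper the $\log\log$ factor is genuine and arises because the degree $k=2L^2$ of the Erd\H{o}s--Sachs graph grows with the scale $L$, so the number of vertices grows like $(2L^2)^{L^2}$; it is an artefact of that particular explicit construction, not of Cayley-graph girth bounds. So your route, if you could close the quasi-isometry gap, would actually aim at a sharper bound, but as written the key lemma is missing and the theorem is not proved.
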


We believe that the lower bound is of order $\frac{\log(g)^2}{g}$ as conjectured for $\sr^{tr}(g)$ in \cite{chms}. Inspired by the constructions of \cite{bu} and \cite{pw} for hyperbolic surfaces, the idea for the lower bound in Theorem \ref{thm:sr_low_intro} is to construct explicit example surfaces from graphs with large girth. 

On the other hand, in the case of hyperelliptic hyperbolic surfaces it can be shown that these surfaces have many short loops. Such a characterization can also be made in the case of translation surfaces: Given a hyperelliptic translation surface $S$ of genus $g \geq 2$, we find homologically independent loops on $S$, whose lengths are bounded from above by a constant. 

\begin{thm}[Hyperelliptic surfaces] Let $S$ be a hyperelliptic translation surface of genus $g \geq 2$ and area $4\pi g$. Then there exist $\lceil  \frac{2g+2}{3}  \rceil$ geodesic loops  $(\alpha_k)_{k=1,\ldots,\left \lceil \frac{2g+2}{3}\right \rceil}$ that can be extended to a homology basis of $H_1(S,\Z)$ such that
\begin{equation*}
\ell(\alpha_{k}) \leq 4 \cdot \sqrt{\frac{ 12g}{ 2g+5-3k}} \text{ \ and \ } \frac{\ell(\alpha_{k})^2}{\area(S)} \leq  \frac{ 48}{ \pi(2g+5-3k)} \text{ for all } k=1, \ldots, \left\lceil \frac{2g+2}{3}\right \rceil.
\end{equation*}
\label{thm:hyper_intro}
\end{thm}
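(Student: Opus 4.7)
The strategy is to pass to the quotient $\bar S := S/J$ of $S$ by the hyperelliptic involution $J$ and reduce the problem to finding short arcs between branch points on a sphere. Let $W=\{w_1,\ldots,w_{2g+2}\}$ be the fixed-point set of $J$ (the Weierstrass points) and $\pi:S\to\bar S$ the projection map. Then $\bar S$ is topologically a sphere, carries an induced half-translation structure of area $2\pi g$, and each image $\bar w_i:=\pi(w_i)$ is a cone point of angle a positive integer multiple of $\pi$, hence at least $\pi$. The key reduction is that a geodesic arc $\bar\gamma$ on $\bar S$ between two distinct points of $\bar W$ lifts under $\pi^{-1}$ to a single geodesic loop on $S$ of length $2\ell(\bar\gamma)$ passing through the two corresponding Weierstrass points. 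Since $J_\ast=-\mathrm{id}$ on $H_1(S,\Z)$ for any hyperelliptic surface, every such lifted loop class is anti-invariant, and the classes of this form together span $H_1(S,\Z)$.

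I construct the loops $\alpha_k$ for $k=1,\ldots,K:=\lceil(2g+2)/3\rceil$ by a greedy induction on the quotient. I maintain a subset $W_k\subseteq \bar W$ with $|W_k|=2g+5-3k$, initialized by $W_1=\bar W$. At step $k$, I choose a pair of points in $W_k$ realizing the minimum pairwise distance on $\bar S$ and let $\bar\gamma_k$ be a shortest geodesic joining them; set $\alpha_k:=\pi^{-1}(\bar\gamma_k)$. I then form $W_{k+1}$ by removing from $W_k$ the two endpoints of $\bar\gamma_k$ together with one further carefully chosen Weierstrass point, so that the cumulative graph $\{\bar\gamma_1,\ldots,\bar\gamma_k\}$ on $\bar W$ is a forest. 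The forest property is precisely what guarantees that the lifts $[\alpha_1],\ldots,[\alpha_k]$ remain linearly independent in $H_1(S,\Z)$, since in a double cover branched at $\bar W$ the classes of lifted arcs are subject only to relations coming from cycles in the endpoint graph; hence the family can be extended to a homology basis.

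The length bound itself comes from a disk-packing argument on $\bar S$. If every pair of points of $W_k$ were at distance greater than $d$, the open metric balls $B(\bar w,d/2)$ for $\bar w\in W_k$ would be pairwise disjoint in $\bar S$; since each is centered at a cone point of angle at least $\pi$, its embedded portion has area at least $\pi(d/2)^2/2=\pi d^2/8$, and a careful treatment of disks truncated by nearby singularities or overlapping themselves costs at most a bounded constant factor. Absorbing this factor and comparing with $\area(\bar S)=2\pi g$ produces $|W_k|\cdot d^2\le 48g$, so that $d\le 2\sqrt{12g/(2g+5-3k)}$. Since $\ell(\alpha_k)=2\ell(\bar\gamma_k)\le 2d$, the length estimate $\ell(\alpha_k)\le 4\sqrt{12g/(2g+5-3k)}$ follows, and dividing by $\area(S)=4\pi g$ gives the systolic-ratio inequality.

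The principal obstacle lies in coordinating the third-point removal with two competing constraints: the length bound weakens as $|W_k|$ shrinks, urging slow elimination, while the forest condition needed for homological independence may force the exclusion of further Weierstrass points whose inclusion could close a cycle with previously chosen arcs. The specific arithmetic $|W_k|=2g+5-3k$ strikes the balance by removing, per step, the two arc endpoints together with a third Weierstrass point chosen so as to block any potential cycle in the growing arc-graph. Verifying that such a third choice can always be made consistently, while simultaneously preserving the packing estimate, is the central technical step. The construction terminates at $K=\lceil(2g+2)/3\rceil$, when $|W_k|$ drops below $3$.
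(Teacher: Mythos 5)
Your high-level plan (pass to the quotient sphere, pack disks around the branch points, lift short arcs to homologically independent loops on $S$) matches the paper's approach, but there are genuine gaps in the two places where the technical work actually lies.

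\textbf{The disk-packing estimate is not correct as stated.} You take $d$ to be the minimum pairwise distance between points of $W_k$ and argue that the balls $B_{d/2}(\bar w)$, $\bar w\in W_k$, are disjoint with area at least $\pi(d/2)^2/2$ up to ``a bounded constant factor'' absorbing the non-embedded case. There is no such constant factor: if some $\bar w\in W_k$ admits a short geodesic loop of length $2s\ll d$ (or if $B_{d/2}(\bar w)$ swallows a Weierstrass point of cone angle $\pi$, which on the quotient sphere is a \emph{positive}-curvature singularity), the ball $B_{d/2}(\bar w)$ can have arbitrarily small area relative to $\pi(d/2)^2/2$, and nothing forces the minimum \emph{pairwise} distance to be comparable to the injectivity radius. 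This is exactly why the paper does not fix a radius in advance. It grows all $2g+2$ disks simultaneously and stops at the first radius $r_m$ at which either a single disk self-intersects (Case A) or two disks touch (Case B); the frozen disks remain embedded and disjoint for all time, which is what makes the area count $\sum \area(B_{r_m}(p_i))\le 2\pi g$ legitimate. The self-intersection event is not a nuisance to be absorbed: it produces a short geodesic \emph{loop} at a Weierstrass point, which lifts to a figure-eight and then to a short non-separating simple closed curve. Your construction only ever produces arcs between distinct Weierstrass points, so it cannot see these loops and the length bound can fail when they are the dominant short feature.

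\textbf{The homological independence criterion is wrong.} You assert that in the double cover the lifted classes ``are subject only to relations coming from cycles in the endpoint graph,'' so a forest suffices. This is false: a \emph{perfect matching} of the $2g+2$ branch points is a forest with no cycles, yet the $g+1$ lifted curves are dependent (their sum vanishes, and their union separates $S$). The correct criterion, Lemma~\ref{lem:lift_nonsep}, is that every complementary region of the arc system on the sphere admits a Jordan curve splitting the Weierstrass points into two odd subsets. Your construction does happen to produce a matching with leftover isolated vertices, and a small Jordan curve around an isolated vertex does give an odd/odd split, so the conclusion you want is recoverable --- but not by the argument you give, and you would need to say this rather than appeal to acyclicity. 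You also explicitly defer ``verifying that such a third choice can always be made consistently'' as ``the central technical step,'' which is an acknowledgment that the argument is incomplete; the paper discharges exactly this burden via the type-$W$ graph structure (Lemma~\ref{lem:graph_nooroneloop}) and the pruning lemma (Lemma~\ref{lem:frompruning}). Finally, a small arithmetical point: with $|W_k|=2g+5-3k$ the set $W_K$ can have size $1$ (e.g.\ $g=4$), so the last step has no pair to choose; the removal schedule needs to be adjusted.
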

This means that for a hyperelliptic surface $S$ of genus $g$ and area $4\pi g$ we can find almost $\frac{2g}{3}$ loops that induce a homology basis which are shorter than a constant. These results will be presented in Section 3 and extend the result from \cite{bmm} for hyperbolic surfaces to the category of translation surfaces. These bounds hold more generally for compact surfaces endowed with a Riemannian metric of non-positive curvature that have finitely many cone-like singularities all of whose cone angles are greater than $2\pi$. This is stated in Theorem \ref{thm:hyper_section}.

\section{Translation surfaces with large systoles} \label{sec:systoles}

To construct translation surfaces with large systoles we base our construction on graphs with large girth which are well-known and studied. Here we will use $k$-regular graphs. A graph is called $k$-regular if all of its vertices have degree $k$. The idea is to identify the vertices with $k$-holed flat tori. The edges of the graph then provide the pasting scheme on how to glue the holes of the tori together. We then adjust the parameters, such that the shortest non-trivial loop in one of the tori is as large as the shortest loop going through the graph. This way we obtain:

\begin{thm}[Surfaces with large systoles] There exist, for infinitely many $g \in \N$ translation surfaces $S_g$ of genus $g$, such that  
\[
            \frac{  \log(g)}{ g \cdot \log(\log(g)) }  \leq   \sr(S_g). 
\]
\label{thm:sr_low}
\end{thm}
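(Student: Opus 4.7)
The plan follows the sketch given in the paragraph just before Theorem~\ref{thm:sr_low}: I would build $S_g$ from a $k$-regular graph $G_n$ of large girth by replacing each vertex of $G_n$ with a flat $k$-slitted torus and gluing the slits across edges of $G_n$ by translations.

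First, I would fix a family of $k$-regular graphs $G_n$ on $n$ vertices with $\girth(G_n)\gtrsim \log_{k-1} n$, for example the LPS Ramanujan graphs or any Moore-saturating family. For each such graph I would then construct a building block $T_k$: a flat rectangular torus of area $A$ carrying $k$ pairwise disjoint horizontal slits of a common length $s$, whose endpoints are at pairwise intrinsic distance at least $D$ from one another on the torus. A routine packing count shows that such a configuration exists as soon as $k(s+D)D \lesssim A$. Identifying the two sides of each slit with the two sides of the corresponding slit on a neighbouring torus (according to the edges of $G_n$) produces a translation surface $S_g$; the Euler characteristic computation $\chi(S_g) = -nk$ then yields $g = 1 + nk/2$, so an infinite sequence of genera appears as $n$ ranges over the LPS family.

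The main step is the systole estimate. Every simple closed geodesic $\gamma$ on $S_g$ falls into one of three families: (i) $\gamma$ is contained in a single slitted torus, forcing length at least $\gtrsim\sqrt{A}$ provided the slits are much shorter than the torus sides; (ii) $\gamma$ wraps around a single handle produced by one slit-pair, forcing length at least $\gtrsim s$; or (iii) $\gamma$ crosses at least $\girth(G_n)$ distinct slits and travels at least $D$ within each visited torus, yielding length at least $\girth(G_n)\cdot D$. Hence
\[
    \sy(S_g) \gtrsim \min\bigl(\sqrt{A},\ s,\ \girth(G_n)\cdot D\bigr).
\]
I would then balance the three quantities by $\sqrt{A} \asymp s \asymp \girth(G_n)\cdot D$; combined with the packing constraint $k(s+D)D \lesssim A$ this forces $k \lesssim \girth(G_n)$. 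Choosing $k \asymp \girth(G_n)$ and solving $\girth(G_n) \asymp \log n / \log k$ gives $k \asymp \log n / \log\log n$. Since $\area(S_g) = nA$ and $g \asymp nk$, I would conclude
\[
    \sr(S_g) \asymp \frac{1}{n} \asymp \frac{k}{g} \asymp \frac{\log g}{g\log\log g},
\]
where I have used $\log n \asymp \log g$, valid because $k$ grows only like a ratio of logarithms.

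The main obstacle I anticipate is case (i) of the systole estimate: with $k$ comparable to $\girth(G_n)$, the slits on a single torus are not few, so one must argue that they do not shorten essential loops. This requires choosing the slit positions carefully and then a flat-geometric argument showing that every homotopically non-trivial loop in $T_k$ avoiding the slits has length at least a definite fraction of $\sqrt{A}$. A secondary but non-trivial point is ensuring that the edge-identifications are pure translations rather than translations composed with a reflection; arranging all slits parallel (horizontal) and identifying the top of each slit with the bottom of its partner slit is the standard way to achieve this consistently across the whole graph.
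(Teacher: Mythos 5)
Your proposal follows the same route as the paper: replace the vertices of a high-girth $k$-regular graph by $k$-slitted flat tori, glue across edges by translations, and balance the in-torus systole against the travel cost forced by the girth, then close with a $\log / \log\log$ inversion. The paper makes this concrete with a one-parameter family: slit length $L$, slit-to-slit distance $2/L$, $k = 2L^2$ slits per torus, torus of side $2L$, girth requirement $\girth(\Gamma)\cdot \tfrac{2}{L} \ge 2L$, and Erd\H{o}s--Sachs graphs with $n(k,l)\le (k-1)^l$; the resulting $\genus = n_L L^2 + 1$ and $\area = n_L(4L^2+8)$ give exactly $\sr \asymp 1/n_L$, and Lemma~\ref{lem:inverse} converts this to the stated bound. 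Your substitution of LPS Ramanujan graphs is harmless (they give the same order of girth; the paper's choice of Erd\H{o}s--Sachs works for all $k\ge 4$, no number-theoretic restriction). One caveat about your case~(i): you condition it on the slits being ``much shorter than the torus sides'', but in the paper's balanced construction this is \emph{not} the case --- slit length $L$ against torus side $2L$ --- and the right argument (Lemma~\ref{lem:length_essential}) is a Jordan-curve count showing that any essential simple closed curve on the slitted torus either bounds a disk that must contain a slit (hence length $\ge 2L$) or has a nonzero winding number (hence a projection of length $\ge 2L$). Similarly, your case~(ii) bound ``$\gtrsim s$'' for loops that exit and return through slits is handled in the paper not by a handle-local argument but by producing, via an auxiliary arc along the slit, an essential loop in a single torus and invoking the same lemma, covering also loops that wander through several tori before homotopically retracing. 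So the parameter balancing and the conclusion are right; the in-torus length estimates are the part of your sketch that genuinely needs the more careful argument the paper supplies.
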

This is Theorem \ref{thm:sr_low_intro} from the introduction. 

\subsection{Construction of surfaces with large systoles} \label{subsec:construct}

We will start with the construction and the properties of the $k$-holed tori. To this end we start by constructing a rectangle $R$ of length $L + 2/L$ and width $2/L$ with a slit of length $L \in \N$ (see Figure \ref{fig:rect_const}).  

\begin{figure}[!ht]
\SetLabels
\L(.23*.69) $1/L$\\
\L(.23*.25) $1/L$\\
\L(.30*.91) $1/L$\\
\L(.30*.01) $1/L$\\
\L(.49*.91) $L$\\
\L(.49*.01) $L$\\
\L(.66*.91) $1/L$\\
\L(.66*.01) $1/L$\\
\L(.73*.69) $1/L$\\
\L(.73*.25) $1/L$\\
\endSetLabels
%\ShowGrid
\AffixLabels{%
\centerline{%
\includegraphics[height=4cm,width=8cm]{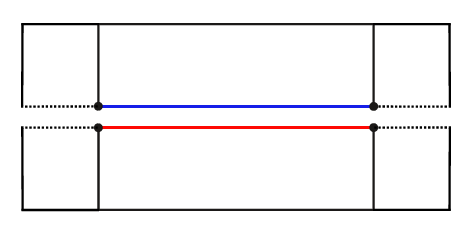}}}
\caption{Construction of a rectangle $R$ with a slit of length $L$. Dashed lines are glued together.}
\label{fig:rect_const}
\end{figure} 
To construct $R$ we proceed as indicated in the figure. We take two rectangles of length $L + 2/L$ and width $1/L$ and glue them together by translational identification, leaving a slit of length $L$ in the middle to obtain our slit rectangle $R$. \\
\\
\textbf{Example 1} We illustrate the later procedure with the following simple example of  a translation surface $X$ of genus two with two cone points of angle $4\pi$. This is a special case of the example given in \cite[Figure 6]{wr}. To this end we first construct a slit torus $P$ from $R$ by pasting together opposite sides of the rectangle $R$ as can be seen in Figure  \ref{fig:surface_X}. We then paste two copies $P_1$ and $P_2$ of $P$ from Figure \ref{fig:surface_X} in the following way. The upper segment of the slit in $P_1$ is pasted to the lower segment of the slit in $P_2$ and vice versa. 
\begin{figure}[!ht]
\SetLabels
\L(.32*.88) $P_1$\\
\L(.68*.88) $P_2$\\
\endSetLabels
%\ShowGrid
\AffixLabels{%
\centerline{%
\includegraphics[height=2.5cm,width=11cm]{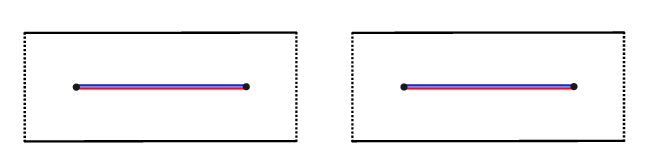}}}
\caption{Dashed and full lines are identified in each of the two rectangles to obtain the two tori $P_1$ and $P_2$. These are glued together along the slit to obtain the surface $X$.}
\label{fig:surface_X}
\end{figure}

\bigskip

We now build a flat torus $T$ with $2L^2$ slits from $2L^2$ copies of $R$. Such a torus will represent a vertex $v$ of our $k$-regular graph $\Gamma$, where 
\begin{equation}
k = 2L^2. 
\label{eq:k_Lsquared}
\end{equation}
\begin{figure}[!ht]
\SetLabels
\L(.29*.45) $\,2L$\\
\L(.45*.95) $2L+4/L$\\
\L(.70*.45) $T$\\
\endSetLabels
%\ShowGrid
\AffixLabels{%
\centerline{%
\includegraphics[height=6cm,width=7.5cm]{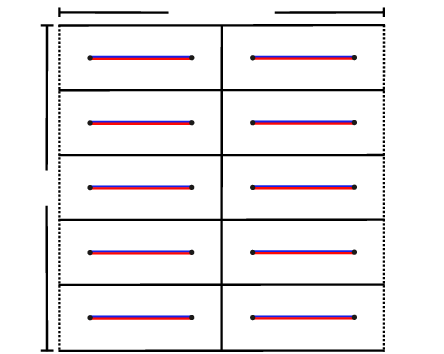}}}
\caption{Construction of a torus $T$ with $2L^2$ slits from copies of the rectangle $R$.}
\label{fig:torus_const}
\end{figure} 

This  construction is illustrated in Figure \ref{fig:torus_const}. To this end we first glue $L^2$ copies of $R$ together with a top to bottom pasting to obtain a column of rectangles $C$ with $L^2$ slits. Then we take two such copies of $C$ and paste them with a left to right identification to obtain the rectangle $T'$ with $2L^2$ slits. Finally we glue the top and the bottom of $T'$ and the left and right side of $T'$ to obtain the torus $T$ with $2L^2$ slits. $T'$ has 
\[
     \text{length \ } 2L + 4/L \text{ \  and width \ } 2L. 
\]
We now contruct a surface $S$ with the help of a $k = 2L^2$ regular graph $\Gamma$. A copy $T_i$ of $T$ represents a vertex $v_i$ of our graph $\Gamma$ and a slit of $T_i$ a half-edge of the vertex $v_i$. The pasting pattern for the slits is provided by the edges of $\Gamma$. 
More precisely, for each vertex $v_i$ of $\Gamma$ and a copy $T_i$ of $T$ we paste slit number $s$ of $T_i$ to slit number $t$ of $T_j$ if and only if half-edge $s$ of vertex $v_i$ is glued to half-edge $t$ of vertex $v_j$ thus forming an edge $e= \{s,t\}$ of $\Gamma$. We call $\Gamma$ the \textit{underlying pasting scheme} or \textit{pasting pattern} of $S$. The pasting of the slits is as in Example 1. The upper segment of the first slit is pasted to the lower segment of the second slit and vice versa. This way all cone points are of angle $4\pi$. \\
In order to get $S$ with a large systole we need a graph $\Gamma$ with a large \textit{girth}, that is, a large length of the shortest non-trivial closed reduced edge path. An edge path is called \textit{reduced} if it has no edge followed by its inverse (and in the closed case, the last edge is not the inverse of the first). The corresponding graphs with large girth will be provided in the following subsection to complete the length estimates. We now identify $\sy(S)$.

\begin{lem} If our graph $\Gamma$ in the construction of $S$ satisfies
\begin{equation}
          \girth(\Gamma) \cdot \frac{2}{L} \geq 2L, \text{ \ then \ } \sy(S) = 2L.  
\label{eq:girth_systorus}          
\end{equation}
\label{lem:sys2L}
\end{lem}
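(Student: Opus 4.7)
The plan is to prove the two inequalities $\sy(S) \leq 2L$ and $\sy(S) \geq 2L$ separately. For the upper bound I would exhibit an explicit vertical loop inside a single $T_i$: the vertical line at any $x$-coordinate missing both slit ranges $[1/L,\,L+1/L]$ and $[L+3/L,\,2L+3/L]$ (for instance $x=0$) crosses no slit interior, so it closes up on $T_i \subset S$ to a simple geodesic loop of length exactly $2L$.

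For the lower bound let $\gamma$ be an arbitrary simple closed geodesic on $S$. If $\gamma$ avoids every slit interior then $\gamma$ lies in a single copy of $T$ and either represents a non-trivial class of the flat torus -- length at least the torus systole $\min(2L,\,2L+4/L) = 2L$ -- or wraps some slit boundary, whose perimeter is itself $2L$. Otherwise $\gamma$ crosses at least one slit interior, and the crossings decompose $\gamma$ into arcs, each contained in a single copy of $T$ and each joining two slits of that copy. The numerical key input is that any two distinct slits in one copy of $T$ lie at flat distance at least $2/L$ from one another -- realised by two adjacent slits in a single column (vertical gap $2/L$), by facing slits in adjacent columns (horizontal gap $2/L$), or via the torus identifications, none of which does better -- so every arc has length at least $2/L$.

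The slit crossings also induce a closed walk $W$ on the pasting graph $\Gamma$. If $W$ is reduced then $|W| \geq \girth(\Gamma)$ and therefore
\[
    \ell(\gamma) \geq \girth(\Gamma) \cdot \frac{2}{L} \geq 2L
\]
by the hypothesis of the lemma. If instead $W$ contains a backtracking step at some vertex $v_{i_j}$, then $\gamma$ enters and leaves $T_{i_j}$ through the same slit; the pasting rule that identifies the upper side of a slit with the lower side of its partner then forces the arc in $T_{i_j}$ to emerge on one side of the slit and return to the opposite side, which on the flat torus of vertical period $2L$ requires a full vertical wrap and contributes at least $2L$ to $\ell(\gamma)$. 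The steps I expect to be most delicate are this wrap-around argument for the backtracking case together with the check that closed geodesics touching a slit-endpoint cone point of angle $4\pi$ cannot take a shortcut below $2L$ by bending there.
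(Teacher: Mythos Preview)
Your overall strategy matches the paper's --- exhibit a vertical $2L$-loop for the upper bound, project $\gamma$ to a closed walk in $\Gamma$, and case-split --- and your reduced-walk case and your ``avoids all slit interiors'' case are essentially correct. The gap is in the backtracking case, and it is a real one, not merely a delicacy.

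Your claim that the pasting rule forces the backtracking arc $c \subset T_{i_j}$ to emerge on one side of the slit $\sigma$ and return to the opposite side is false. If $\gamma$ approaches $\sigma$ from above in $T_i$ at $p$, it enters $T_{i_j}$ on the lower side $\sigma^-$ heading downward; if after wandering it approaches $\sigma$ from below in $T_{i_j}$ at $q$, it leaves through $\sigma^-$ again and re-emerges above $\sigma$ in $T_i$. Both endpoints of $c$ then lie on the \emph{same} side $\sigma^-$, so no vertical wrap is forced. Even granting opposite sides, an arc could circumnavigate a slit endpoint rather than wrap the torus, so the ``$\geq 2L$ from one arc'' conclusion is unsupported.

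The paper repairs this differently. For an arc $c$ with both endpoints on a single slit it proves only $\ell(c) \geq L$ (not $2L$): close $c$ by a segment $d\subset\sigma$ of length $\leq L$, observe that $c\cdot d$ is an essentially simple non-contractible closed curve on the slit torus, and invoke a separate lemma (Lemma~\ref{lem:length_essential}) that any such curve has length $\geq 2L$. Since a single arc yields only $\geq L$, the paper then notes that a null-homotopic but non-trivial walk always produces \emph{two} such arcs $c\subset T_i$ and $c'\subset T_j$ in different tori, whose lengths sum to $\geq 2L$. That slit-torus lemma is the ingredient your sketch lacks; it also cleanly covers your ``stays in one $T_i$'' case, including the possibility that $\gamma$ is contractible on the underlying torus but encircles a slit.
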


This implies that there are systoles of $S$ on every copy $T_i$ of the torus $T$.

\begin{proof}[Proof of Lemma \ref{lem:sys2L}]

Assume that our graph $\Gamma$ in the construction satisfies the condition of the lemma. 

\begin{figure}[!ht]
\SetLabels
\L(.32*.88) $T_i$\\
\L(.23*.32) $p$\\
\L(.33*.48) $c$\\
\L(.41*.32) $q$\\
\L(.68*.88) $T_i$\\
\L(.58*.32) $p$\\
\L(.67*.48) $\,\,c$\\
\L(.75*.32) $\,q$\\
\endSetLabels
%\ShowGrid
\AffixLabels{%
\centerline{%
\includegraphics[height=2.5cm,width=11cm]{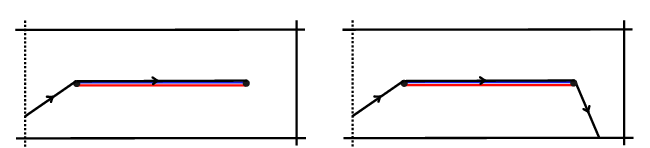}}}
\caption{An arc $c$ of the geodesic $\gamma$ connecting two cone points $p$ and $q$.}
\label{fig:geod_slit}
\end{figure} 

To any simple closed geodesic $\gamma$ on $S$ we associate an edge path $\xi(\gamma)$ in the standard way: If $\gamma$ begins on copy $T_{i_0}$ of $T$, then $\xi$ begins on vertex $v_{i_0}$. Then, successively, whenever $\gamma$ leaves some $T_i$ at slit $s$ entering the adjacent $T_j$ at slit $t$ we add edge $e = \{s,t\}$ to the edge path. For the case when some subarc of $\gamma$ runs along a slit say of $T_i$ and thus is simultaneously visible on $T_i$ and on the adjacent $T_j$ we must make precise at which point $\gamma$ shall be considered entering $T_j$. Since $\gamma$ is a geodesic it runs along an entire side of the slit from one singular point to the other. Figure \ref{fig:geod_slit} shows two cases: on the left hand side subarc $c$ enters a rectangle of $T_i$ arrives at the slit on the cone point $p$ goes along the slit and leaves $T_i$ into $T_j$ at the other singular point  $q$. In this case $q$ will be the exit point. On the right hand side $c$ also goes from $p$ to $q$ but then continues in $T_i$. In this case $c$ is not considered being entering $T_j$. A particular case occurs when $\gamma$ runs entirely along the slit from $p$ to $q$ and then back to $p$. In this case $\xi(\gamma)$ is the point path $\{v_i\}$. For $\xi(\gamma)$ there are therefore two cases:

\begin{itemize}
\item[Case 1.)] $\xi(\gamma)$ is  homotopic in $\Gamma$ to a non-trivial cycle: In this case $\xi(\gamma)$ has combinatorial length greater or equal to $\girth(\Gamma)$ and $\gamma$ contains at least $\girth(\Gamma)$ arcs with disjoint interiors each of which connects two distinct slits of some $T_i$ . By the following Lemma \ref{lem:slit_connect} we have
\begin{equation}
     \ell(\gamma) \geq \girth(\Gamma) \cdot \frac{2}{L}.
\label{eq:gamma_case1}     
\end{equation}

\begin{lem} Any arc in $T$ that connects two distinct slits with each other has length greater or equal to $2/L$. 
\label{lem:slit_connect}
\end{lem}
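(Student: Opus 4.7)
The plan is to reduce the length bound to a distance bound in $T$ and then establish the distance bound by an open-neighborhood argument. Since the length of any arc is at least the distance between its endpoints, it suffices to show that for any two distinct slits $s_1, s_2$ of $T$ one has
\[
\dist(s_1, s_2) \;\geq\; \frac{2}{L},
\]
the distance being measured in the flat metric of $T$.

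To this end I would introduce the open $1/L$-neighborhoods
\[
U_i \;=\; \bigl\{\, x \in T : \dist(x, s_i) < 1/L \,\bigr\}, \qquad i = 1,2,
\]
and prove that $U_1 \cap U_2 = \emptyset$. Once this is in hand the distance bound is immediate from the triangle inequality: a path of length strictly less than $2/L$ from $s_1$ to $s_2$ would have its midpoint at distance $< 1/L$ from each endpoint, hence in $U_1 \cap U_2$, a contradiction.

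To verify disjointness, the key point is the central placement of a slit $s$ inside its rectangle $R$: by the construction of Figure~\ref{fig:rect_const}, the slit lies along the horizontal mid-line of $R$, its two endpoints are at horizontal distance $1/L$ from the vertical sides of $R$, and every interior point of $s$ is at vertical distance exactly $1/L$ from the top and bottom edges. A direct Euclidean estimate then gives that every point of $\partial R$ is at distance $\geq 1/L$ from $s$. Consequently any path in $T$ of length $< 1/L$ starting on $s$ cannot reach $\partial R$ and must remain inside the open interior of the copy of $R$ containing $s$. This yields $U_i \subset \mathrm{int}(R_i)$, where $R_i$ denotes the copy of $R$ in $T$ containing $s_i$. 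Since each copy of $R$ in the tiling of $T$ carries exactly one slit, $s_1 \neq s_2$ forces $R_1 \neq R_2$, and the interiors of distinct tiles are disjoint, so $U_1 \cap U_2 = \emptyset$.

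The only conceptual subtlety is that the identifications used to build $T$ from copies of $R$ could \emph{a priori} produce a shortcut between two slits through a neighbouring rectangle; the containment $U_i \subset \mathrm{int}(R_i)$ rules this out by confining the short paths to a single copy of $R$, where the argument reduces to an elementary computation in the Euclidean rectangle, and no further verification beyond the dimensions fixed in the construction of $R$ is needed.
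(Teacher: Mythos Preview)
Your proof is correct and rests on exactly the same observation as the paper's: the slit in the base rectangle $R$ lies at distance $1/L$ from $\partial R$, so any arc joining two distinct slits must spend at least $1/L$ leaving one rectangle and at least $1/L$ entering the other. The paper compresses this into a single sentence, while you have spelled out the neighborhood and triangle-inequality reasoning in full; the underlying argument is the same.
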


\begin{proof} In the base rectangle $R$ (see Figure \ref{fig:rect_const}) the distance from the slit to the boundary equals $1/L$. \end{proof}

\item[Case 2.)] $\xi(\gamma)$ is homotopic to a point path:
\begin{itemize}
\item[Case 2.1)] $\xi(\gamma)$ itself is not a point path: This can, for example, happen, if a loop goes through a few tori $T_i$ and then returns to its starting torus. In this case $\gamma$ contains two subarcs $c$ and $c'$ with disjoint interiors, such that $c$ lies in some $T_i$ having its endpoints on a slit $\sigma$ of $T_i$ and $c'$ lies in some other $T_j$ having its endpoints on a slit $\sigma'$ of $T_j$. We claim that 
\begin{equation}
\ell(c) \geq L  \text{ \ and \ } \ell(c') \geq L.
\label{eq:length_c'}
\end{equation}

\begin{figure}[!ht]
\SetLabels
\L(.44*.61) $\,c$\\
\L(.48*.39) $\sigma$\\
\L(.55*.54) $\,d$\\
\L(.51*.13) $c$\\
\endSetLabels
%\ShowGrid
\AffixLabels{%
\centerline{%
\includegraphics[height=2.5cm,width=6cm]{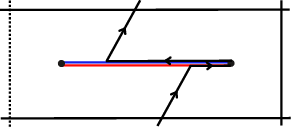}}}
\caption{The auxiliary curve $d$ on the slit $\sigma$ connects two arcs of $c$.}
\label{fig:geod_c}
\end{figure} 

\begin{proof}[Proof of Inequality \eqref{eq:length_c'}] Consider w.l.o.g. $c$ on $T_i$ with endpoints on the slit $\sigma$. If $c$ contains a subarc of positive length on one of the sides of $\sigma$ then, being a geodesic $c$ covers the entire side and has length greater or equal to $L$. Now assume that this is not the case. Then again, because $c$ is a geodesic, it is not homotopic with fixed endpoints to an arc that lies entirely on $\sigma$. Connect the endpoint of $c$ with the initial point by a geodesic arc $d$ of length $\ell(d) \leq L$ that lies on $\sigma$ (Figure \ref{fig:geod_c} shows a case). Then the curve $c$ followed by $d$ is essentially simple and not homotopic to a point curve.

\textit{Essentially simple} means that possible double points which may occur at the cone points can be removed by small homotopies. The inequality $\ell(c) \geq L$ now follows from the next lemma.  

\begin{lem} Let $\eta$ be an essentially simple closed curve on the slit torus $T$ that is not homotopic to a point. Then $\ell(\gamma) \geq 2L$. 
\label{lem:length_essential}
\end{lem}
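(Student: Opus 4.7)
The plan is to pass to a suitable cover in which the lattice structure of the torus is unfolded while the slits are preserved, reducing the question to a planar isoperimetric estimate. Let $T_0$ denote the flat torus of dimensions $(2L + 4/L) \times 2L$ obtained from $T$ by reidentifying the two sides of each slit, so that the quotient $q\colon T\to T_0$ induces a surjection $q_* : \pi_1(T) \to \pi_1(T_0) \cong \Z^2$ whose kernel is normally generated by the loops around individual slits. Let $\widehat T$ be the cover of $T$ corresponding to this kernel; concretely, $\widehat T$ is the Euclidean plane with a $\Lambda$-periodic pattern of horizontal segments of length $L$ removed, where $\Lambda = \Z(2L+4/L,0) + \Z(0,2L)$, and $\widehat T \to T$ is the quotient by the $\Lambda$-action. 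In particular $\widehat T$ inherits a flat metric and is isometrically embedded in $\R^2$.

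Next I would lift $\eta$ to a path $\widehat\eta$ in $\widehat T$ whose endpoints differ by some $\vec v \in \Lambda$, and split into two cases. If $\vec v \neq 0$, then
\[
\ell(\eta) = \ell(\widehat\eta) \geq |\vec v| \geq 2L,
\]
because the shortest non-zero vector in $\Lambda$ has Euclidean length $2L$ (as $2L \leq 2L + 4/L$). If instead $\vec v = 0$, then $\widehat\eta$ is an essentially simple closed curve in $\widehat T$; the hypothesis that $\eta$ is not null-homotopic on $T$, combined with the fact that $\widehat T \to T$ is a covering, forces $\widehat\eta$ to be non-null-homotopic in $\widehat T$, so the Jordan domain $D$ bounded by $\widehat\eta$ in $\R^2$ must contain at least one slit $\sigma$ of length $L$.

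To close the second case I would apply a one-dimensional projection argument: since $\sigma$ is a horizontal segment lying in the interior of $D$, for every $x_0$ in the $x$-range of $\sigma$ the vertical line $\{x = x_0\}$ crosses $\partial D = \widehat\eta$ at least twice (once above $\sigma$, once below), so the total $x$-variation of $\widehat\eta$ is at least $2L$, and hence $\ell(\widehat\eta) \geq 2L$. The main point to watch is the behaviour of $\widehat\eta$ near slit endpoints, which are the cone points of $T$; these can lie on $\widehat\eta$, but since $\widehat\eta$ is essentially simple the Jordan-domain argument still applies after perturbing the curve by a small homotopy off the cone points, which is precisely what the ``essentially simple'' hypothesis provides. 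Combining both cases gives $\ell(\eta) \geq 2L$, as required.
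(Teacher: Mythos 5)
Your proof is correct and takes essentially the same approach as the paper, which projects $\eta$ to the closed torus $\overline T$ (rather than lifting to the $\Z^2$-cover) and splits into the same two cases according to the image of $[\eta]$ in $\pi_1(\overline T)\cong\Z^2$: a Jordan-domain-enclosing-a-slit estimate when it vanishes, and a lattice/projection estimate when it does not. The difference is only cosmetic; your writeup is somewhat more explicit in spelling out the vertical-line crossing count showing that a Jordan domain enclosing a length-$L$ segment has perimeter at least $2L$, which the paper leaves implicit.
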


\begin{figure}[!ht]
\SetLabels
\L(.20*.91) $\overline{T}$\\
\L(.13*.56) $D$\\
\L(.13*.41) $\eta_1$\\
\L(.29*.56) $a$\\
\L(.21*.22) $b$\\
\L(.49*.91) $\,\overline{T}$\\
\L(.45*.41) $\eta_2$\\
\L(.58*.56) $a$\\
\L(.50*.22) $\,b$\\
\L(.79*.91) $\overline{T}$\\
\L(.76*.41) $\,\,\eta_3$\\
\L(.88*.56) $a$\\
\L(.80*.22) $b$\\
\endSetLabels
%\ShowGrid
\AffixLabels{%
\centerline{%
\includegraphics[height=5.5cm,width=14cm]{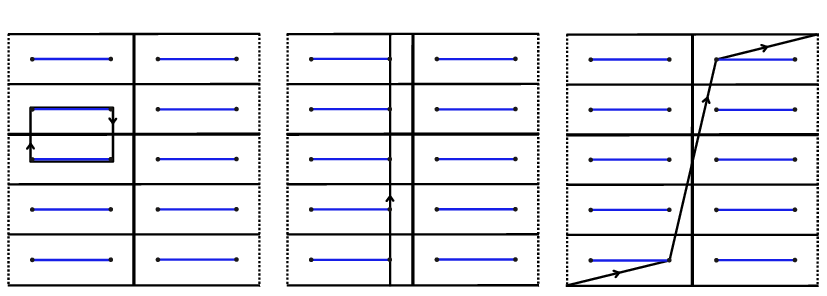}}}
\caption{Non-trivial simple closed curves of $T$ in $\overline{T}$.}
\label{fig:geod_torus}
\end{figure} 
\begin{proof} Let $\overline{T}$ be the torus obtained from $T$ by closing the opened slits again. We denote by $a$ and  $b$ generators of the fundamental group of $\overline{T}$ represented by closed geodesics of length $\ell(a) = 2L+ 4/L$ and $\ell(b) = 2L$ (see Figure \ref{fig:geod_torus}). In $\overline{T}$ the curve $\eta$ is either homotopic to a point, or else homotopic to some $a^m\cdot b^n$ with $|m| + |n| >0$. If it is homotopic to a point, then, by the Jordan curve theorem it bounds a simply connected domain $D$ as in the example of $\eta_1$ in Figure \ref{fig:geod_torus}. Since $\eta$ is not contractible in $T$, $D$ contains at least one of the sewed slits and so $\ell(\eta) \geq 2L$. If $\eta$ is homotopic to $a^m \cdot b^m$, then its vertical projection with respect to the Euclidean geometry of $T$ has length greater or equal to $|m|\cdot (2L + 4/L)$ and its horizontal projection to $b$ has length greater or equal to $|n| \cdot 2L$. Since $|m| + |n| \neq 0$ this implies that $\ell(\eta) \geq 2L$. This completes the proof of Lemma \ref{lem:length_essential}.   
\end{proof}
Together with the lemma, inequality \eqref{eq:length_c'} follows. \end{proof}

\item[Case 2.2)] $\xi(\gamma)$ is a point path. In this case $\gamma$ runs entirely in some $T_i$ and Lemma \ref{lem:length_essential} implies that $\ell(\gamma) \geq 2L$. 
\end{itemize}

\end{itemize}   
In total we have shown that $\sy(S) = 2L$ and this length is realized by the systoles of the tori $T_i$ in $S$. This concludes the proof of Lemma \ref{lem:sys2L}.
\end{proof}

\subsection{Length estimates} \label{subsec:length}

In order to get $S$ with a large systole we need a regular graph $\Gamma$ with a large \textit{girth}. At the same time $S$ should have "small area". So $\Gamma$ should have a large girth and a relatively small number of vertices. Several constructions are known (\cite{lps},\cite{ls}). The best result for our purpose is from \cite{es}. There, for any $k \geq 2$ and $l \geq 3$ a $k$-regular graph $\Gamma_{k,l}$ of girth $l$ with at most $n(k,l)$ vertices is constructed where 

\begin{equation*}
n(k,l) \leq 4 \sum_{t=1}^{l-2} (k-1)^t.
\end{equation*}
For $k\geq 4$ we have the simplification 
\begin{equation*}
n(k,l) \leq (k-1)^l, \quad (k\geq 4).
\end{equation*}
We recall, that by \eqref{eq:girth_systorus} our goal is to find $\Gamma$, such that
\[
        \girth(\Gamma) \cdot \frac{2}{L} \geq 2L \text{ \ or \ }   \girth(\Gamma)  \geq L^2.
\]
Let now $L \in \N$, $L \geq 2$, take the above slit torus $T$ with $k = 2L^2$ slits (see \eqref{eq:k_Lsquared}) and set $l = L^2$. The graph $\Gamma_L := \Gamma_{2L^2,L^2}$ then has girth $L^2$ and $n_L:=n(2L^2,L^2) \leq (2L^2 -1)^{L^2}$ vertices. By Lemma \ref{lem:sys2L} the surface $S_L$ resulting from pasting $n_L$ copies of $T$ together with pasting pattern $\Gamma_L$ has systole $\sy(S_L) = 2L$.

The genus of $S_L$ can be found using the Euler characteristic that relates the genus  to the cone angles. If a translation surface of genus $g$ has cone points $(p_i)_{i=1 ,\ldots, N}$ with cone angle $2\pi \cdot (k_i+1)$ at $p_i$, where $k_i \in \N$, then
\begin{equation}
      \sum_{i=1}^N k_i = 2g-2.
\label{eq:Euler}
\end{equation}

By this equation and the fact that all cone points of $S_L$ are of angle $4\pi$ we have
\begin{eqnarray}
\nonumber
\area(S_L) &=& n_L(4L^2 +8) \\
g_L := \genus(S_L) &=& n_L L^2 +1. 
\label{eq:area_genus}
\end{eqnarray}
Hence, for any $L \in \N$, $L\geq 2$ we have constructed a translation surface $S_L$ with systole $\sy(S_L) = 2L$ and genus
\begin{equation}
g_L \leq L^2 (2L^2-1)^{L^2} + 1.
\end{equation}
Furthermore, for large $L$ the area $\area({S_{L}})$ is close to $4g$. Theorem \ref{thm:sr_low} now follows from the next lemma. 
\hfill $\square$
\begin{lem}\label{lem:inverse}
If $4 < y \leq x (2x)^x$ and $x > 6$, then $\frac{\log(y)}{\log \log(y)} < x$.
\end{lem}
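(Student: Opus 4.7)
The plan is to exploit the unimodal structure of the auxiliary function $h(t) := t/\log t$ to reduce the lemma to a single calculus inequality at the upper endpoint $y = x(2x)^x$.

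First I note that $h$ is strictly decreasing on $(1,e)$, attains its minimum $h(e)=e$, and is strictly increasing on $(e,\infty)$; in particular, on any subinterval of $(1,\infty)$ its maximum is attained at an endpoint. Since $\log(y)/\log\log(y) = h(\log y)$ and $\log y \in [\log 4,\, \log Y]$ with $Y := x(2x)^x$, this unimodality forces
\[
h(\log y) \;\leq\; \max\bigl(h(\log 4),\, h(\log Y)\bigr).
\]
The left endpoint is trivial: $h(\log 4) = \log(4)/\log\log(4) < 5 < 6 < x$. So the whole task reduces to proving $h(\log Y) < x$.

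Expanding $\log Y = (x+1)\log x + x\log 2$, the inequality $h(\log Y) < x$ is equivalent to $\log Y < x\,\log\log Y$, and after dividing by $x$ and exponentiating, to
\[
2\,x^{1+1/x} \;<\; (x+1)\log x + x \log 2.
\]
I would verify this by bounding the left side uniformly using that $t \mapsto t^{1/t}$ is strictly decreasing on $(e,\infty)$, giving $2\,x^{1+1/x} < 2 \cdot 6^{1/6}\,x \approx 2.70\,x$ for $x > 6$. On the right side, $(x+1)\log x + x\log 2 \geq x\log(2x)$, and a direct evaluation at $x = 6$ shows a positive slack of approximately $0.5$. A derivative comparison then extends the inequality to the entire half-line $(6,\infty)$, since the derivative of the right side grows like $\log(2x)$ while that of the left tends to $2$.

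The main obstacle I anticipate is the sharpness of this calculus inequality near $x = 6$: the slack at the boundary is small, which is exactly what pins down the hypothesis $x > 6$ in the statement. A careful monotonicity or derivative argument is required to rigorously cover the whole interval $(6, \infty)$ rather than just the asymptotic regime, where the dominant growth $x \log x$ on the right easily swallows the $2x$-growth on the left.
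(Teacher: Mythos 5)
Your proof is correct and tracks the paper's reduction closely — both restrict to the boundary value $y = x(2x)^x$ and then establish a single real inequality in $x$ — but you supply a justification the paper omits and choose a different finishing technique. On the first point: the paper simply asserts ``it suffices to prove the lemma for the case of equality,'' which is imprecise since $\log y/\log\log y$ is not monotone in $y$ on $(4,\infty)$; your explicit appeal to the unimodality of $h(t)=t/\log t$ (decreasing on $(1,e)$, increasing on $(e,\infty)$) cleanly disposes of the left end $y$ near $4$, where $h(\log 4)\approx 4.25 < x$, and isolates the right endpoint as the only nontrivial case. On the second point: with $A := \log x + x\log(2x)$, the target $A < x\log A$ is the same in both proofs, but the paper verifies $6<x<12$ by a plot and for $x\geq 12$ sandwiches $3x < A < x\log(3x)$, which immediately gives $A < x\log(3x) < x\log A$; you instead exponentiate to $2x^{1+1/x} < (x+1)\log x + x\log 2$, check $x=6$ numerically (slack about $0.5$), and compare derivatives. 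Your sketch of that last step is sound and can be made rigorous in two lines: since $t\mapsto t^{1/t}$ decreases for $t>e$, one has $\frac{d}{dx}\bigl(2x^{1+1/x}\bigr) = 2x^{1/x}\bigl(1 + (1-\log x)/x\bigr) < 2\cdot 6^{1/6} < 2.70$ for $x>6$, while the right-hand side has derivative $\log(2x)+1+1/x > \log 12 + 1 > 3.4$ there, so the difference is increasing on $(6,\infty)$ and starts positive at $x=6$. The upshot is that your version is self-contained (no plot and an explicit reduction argument) at the cost of a bit more calculus, whereas the paper's sandwich trick is slicker for large $x$ but relegates $6<x<12$ to graphics.
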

\begin{proof} It suffices to prove the lemma for the case of equality $y = x(2x)^x$. Since then $\log(y) = \log(x) + x \log(2x))$ the statement for $x$ is equivalent to
\begin{equation*}
\log(x) + x \log(2x) \leq x \log( \log(x) + x \log(2x)).
\end{equation*}
For $6 < x < 12$ this inequality may be checked by plotting the two functions of $x$. For $x \geq 12$ we argue that then
\begin{equation*}
\log(x) + x \log(2x) < x \log(3x) \quad \text{and} \quad 3x < \log(x) + x \log(2x).
\end{equation*}

\vspace{-18pt}

\end{proof}
We add that in Lemma \ref{lem:inverse} we also have the upper bound $x < \log(y)$, for $x>6$.

\section{Short loops on hyperelliptic translation surfaces} \label{sec:loops}

In this section we apply the methods used in \cite{bmm}  in order to show that, in the same way as for hyperbolic surfaces, large systoles on translation surfaces do not occur if they are hyperelliptic. As only topological and area arguments shall be used for this we consider, more generally, any compact orientable surface endowed with a Riemannian metric of non-positive curvature that has finitely many cone-like singularities all of whose cone angles are greater than $2\pi$. We call them \emph{cn(0) surfaces} (short for ``two dimensional cone manifold of curvature $\leq0$''). Any translation surface is a cn(0) surface. The relevant property of a cn(0)-surface is that any embedded open disk $B_r(q)$ of radius $r$ centered at some point $q$ has area
\begin{equation}\label{eq:areaB}
\area ( B_r(q)) \geq \pi \, r^2.
\end{equation}
A cn(0) surface $S$ of genus $g$ is called \emph{hyperelliptic} if there exists an isometry $\phi : S \to S$ of order two, called the \emph{hyperelliptic involution}, that has $2g+2$ fixed points. The fixed points are called the \emph{Weierstrass points}; some of the Weierstrass points may be at the same time cone points.

We first restrict ourselves to the systole for which the arguments are fairly straightforward. The normalisation of the area to $4\pi g$ in the following theorem is made to simplify the calculations. The theorem is stated for the homological systole $\sy_h(S)$.

\begin{thm}[Homology systoles of hyperelliptic surfaces] Let $S$ be a hyperelliptic cn(0) surface of genus $g \geq 2$ and area $4\pi g$. Then %
\begin{equation*}
\sy_h(S)  \leq  4\sqrt{2} \text{ \ and \ } \frac{\sy_h(S)^2}{\area(S)} \leq \frac{8}{\pi g}.
\end{equation*}
\label{thm:hyperell_systole}
\end{thm}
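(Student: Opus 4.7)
The plan is to produce a short loop that is homologically nontrivial by exploiting the hyperelliptic symmetry, and then bound its length by packing disks around the $2g+2$ Weierstrass points.

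The key construction: for any two Weierstrass points $w_i,w_j$ and any minimizing geodesic $\gamma$ from $w_i$ to $w_j$, the image $\phi(\gamma)$ is another minimizing geodesic between the same endpoints, since $\phi$ is an isometry fixing both points. Concatenating yields a closed piecewise-geodesic loop $\alpha := \gamma\cdot\phi(\gamma)^{-1}$ of length $2\,\dist(w_i,w_j)$. The task thus reduces to (a) finding a pair of Weierstrass points at distance $\leq 2\sqrt{2}$, and (b) verifying that $[\alpha]\neq 0$ in $H_1(S,\Z)$.

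For (a), I would apply the cn(0) area inequality \eqref{eq:areaB}. Choose $r$ strictly less than half the minimum pairwise distance between Weierstrass points, and less than the relevant injectivity radii, so that the balls $B_r(w_i)$ are pairwise disjoint and embedded. Then
\[
(2g+2)\,\pi r^2 \;\leq\; \sum_{i=1}^{2g+2} \area(B_r(w_i)) \;\leq\; \area(S) \;=\; 4\pi g,
\]
so $r^2 \leq 2g/(g+1) < 2$, i.e.\ $r < \sqrt{2}$. Taking $r$ up to the supremum produces two Weierstrass points $w_i,w_j$ with $\dist(w_i,w_j)\leq 2\sqrt{2}$, and hence $\ell(\alpha) \leq 4\sqrt{2}$.

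For (b), I would pass to the branched double cover $\pi\colon S \to S/\phi \cong S^2$. The arc $\bar\gamma := \pi(\gamma)$ joins two distinct branch points, and $\alpha = \pi^{-1}(\bar\gamma)$. The complement $S^2 \setminus \bar\gamma$ is an open disk containing the remaining $2g \geq 4$ branch points, and over this disk the two sheets of the cover are joined at each such branch point. Hence $S \setminus \alpha$ is connected, so $\alpha$ is non-separating and $[\alpha]\neq 0$. Combining (a) and (b) gives $\sy_h(S) \leq 4\sqrt{2}$ and $\sy_h(S)^2/\area(S) \leq 8/(\pi g)$.

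The delicate step I expect is the embeddedness assumption needed to apply \eqref{eq:areaB}: if some injectivity radius at a $w_i$ is smaller than the candidate $r$, the disjoint-balls argument breaks down. In that case there is already a short geodesic loop at $w_i$, and pairing it with $\phi$ (or equivalently, carrying out the packing argument in the $\cat(0)$ universal cover, where the area lower bound on disjoint balls holds unconditionally) still yields the bound. This is essentially the same technical issue that is handled in the hyperbolic analogue \cite{bmm}.
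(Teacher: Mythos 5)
Your proposal follows the same broad strategy as the paper: pack disks around the $2g+2$ Weierstrass points, use the cn(0) area inequality to find a short geodesic arc with Weierstrass endpoints, and lift it to a short homologically non-trivial closed curve on $S$. Carrying out the packing on $S$ rather than on the quotient sphere $\Sigma = S/\phi$ is only a cosmetic difference, since $B_r(p_i^*) = \Pi^{-1}(B_r(p_i))$ and the areas differ by the factor two that you account for; the numbers come out identically. Your monodromy argument for step (b) --- that $\Sigma\setminus\bar\gamma$ is a disk containing $2g$ branch points, so the double branched cover $S\setminus\alpha$ over it is connected --- is a nice, self-contained alternative to the paper's appeal to \cite{mu1}.

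The genuine gap is exactly the case you flag at the end but do not resolve: the supremal packing radius may be limited not by two disks touching but by a single disk $B_r(w_i)$ ceasing to be embedded. You observe that this produces a short geodesic loop $\lambda$ at $w_i$ (of length at most $2\sqrt2$), but it is not at all clear that $\lambda$ is homologically non-trivial, and ``pairing it with $\phi$'' is too vague to extract a non-separating curve. Your alternative suggestion, to run the packing in the $\cat(0)$ universal cover, does not bypass the problem: the area bound there applies to disjoint embedded balls, but after projecting to $S$ the balls overlap exactly when such a short loop exists, which is the situation you were trying to avoid. The paper handles this case (its Case A) by working on $\Sigma$: the first self-tangency of $\partial B_{r_1}(p_1)$ yields a simple geodesic loop $\delta_1$ on the sphere of length $2r_1$, which lifts to a \emph{figure-eight} geodesic $\delta_1^*$ on $S$; splitting $\delta_1^*$ at $p_1^*$ gives two \emph{non-separating} simple closed geodesics of length at most $2r_1 < 2\sqrt2$, with the non-separating property coming from \cite{mu1}. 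This figure-eight lift is the missing mechanism; without it your bound is only conditional on the packing obstruction being of the two-point type.
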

\begin{proof}
We formulate the proof in such a way that it also serves as Step 1 in the proof of the next theorem. Let $p_1^*, \ldots, p_{2g+2}^*$ be the Weierstrass points fixed by the hyperelliptic involution $\phi : S \to S$. The quotient $\Sigma = S/\phi$ under the action of $\phi$ is a topological sphere, we denote by $p_1, \ldots, p_{2g+2}$ the images of $p_1^*, \ldots, p_{2g+2}^*$ under the natural projection $\Pi : S \to \Sigma$ (see Figure \ref{fig:hyp}). These too shall be called \emph{Weierstrass points}. 
Since $\phi$ acts as an isometric involution on each $B_r(p_i^*)$ the metric disks $B_r(p_i)$ on $\Sigma$ have half the areas of the $B_r(p_i^*)$ on $S$. Thus, by \eqref{eq:areaB}, as long as for given $i$ the radius $r$ is such that $B_r(p_i)$ is embedded,
\begin{equation}\label{eq:areaBhalf}
\area ( B_r(p_i)) = \frac{1}{2} \area ( B_r(p_i^*)) \geq \frac{\pi}{2}\cdot r^2, \quad i=1, \ldots, 2g+2.
\end{equation}

\begin{figure}[!ht]
\SetLabels
\L(.43*.25) $p_1$\\
\L(.42*.77) $p_1^*$\\
\L(.31*.25) $p_2$\\
\L(.35*.77) $p_2^*$\\
\L(.55*.25) $p_{10}$\\
\L(.66*.25) $p_9$\\
\L(.78*.25) $p_8$\\
\L(.88*.25) $p_7$\\
\L(.95*.25) $p_6$\\
\L(.02*.25) $p_5$\\
\L(.09*.25) $p_4$\\
\L(.20*.25) $p_3$\\
\L(.52*.75) $\beta$\\
\L(.36*.90) $\alpha'$\\
\L(.36*.65) $\alpha''$\\
\L(.42*.12) $\delta_1$\\
\L(.45*.01) $q$\\
\L(.46*.74) $\delta^*_1$\\
%\L(.43*.90) $\gamma_1'$\\
%\L(.43*.65) $\gamma_2'$\\
\L(.49*.50) $S$\\
\L(.48*.35) $\Sigma$\\
\endSetLabels
%\ShowGrid
\AffixLabels{%
\centerline{%
\includegraphics[height=6cm,width=15cm]{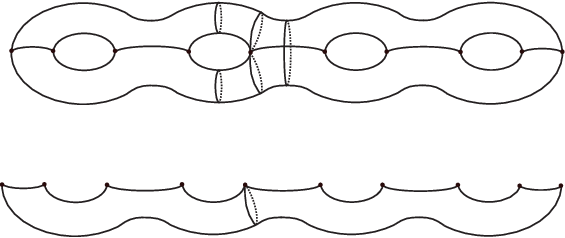}}}
\caption{A hyperelliptic surface $S$ of genus four with ten Weierstrass points.}
\label{fig:hyp}
\end{figure}

The strategy is to find a short geodesic loop at some $p_i$ on $\Sigma$ and lift it to a short homologically non-trivial closed curve on the covering surface $S$. For this we look at the union of disks \mbox{$B_r(p_1) \cup \ldots \cup B_r(p_{2g+2}) $} on $\Sigma$ and let $r$ grow continuously beginning with $r=0$ until one of the following two cases occurs for the first time:

\medskip
Case A)  \emph{the closure of a single disk comes to self-intersect.}

Case B)  \emph{the closures of two different disks come to intersect.}

\medskip

Let $r_1$ be the smallest value of $r$ for which this occurs. Then the open disks $B_{r_1}(p_1), \ldots, B_{r_1}(p_{2g+2})$ are embedded and pairwise disjoint but not so all of the closed ones. Since the sum of the areas of the disjoint disks is less than $\area(\Sigma) = \frac{1}{2} \area(S) = 2\pi g$ we have by \eqref{eq:areaBhalf},
$(2g+2) \frac{\pi}{2}r_1^2 \leq 2\pi g$ which yields
\begin{equation}
r_1 \leq \sqrt{\frac{2g}{g+1}} < \sqrt{2}.
\end{equation}
Now suppose it is Case A) that is occurring for $r_1$, say for the disk at $p_1$ (see Figure \ref{fig:hyp}). Then there are two geodesic arcs of length $r_1$ emanating from $p$ meeting at their endpoints in some point $q$. This point may be an ordinary point or a cone point. Since the open disk $B_{r_1}(p_1)$ is embedded the arcs meet at an angle $\pi$ provided $q$ is an ordinary point respectively, in such a way that both angles at $q$ are $\geq \pi$ if $q$ is a cone point. In either case the two arcs form a geodesic loop $\delta_1$ (see Figure \ref{fig:hyp}). It was shown in \cite{mu1} that $\delta_1$ lifts  to a figure eight closed geodesic $\delta^*_1$ of length $2\ell(\delta_1) = 4r_1$ with self intersection at the lift $p^*_1$ of $p_1$. Splitting $\delta^*_1$ at $p^*_1$ into two loops we get two non-separating simple closed geodesics $\alpha'$ and $\alpha''$ in the free homotopy classes of these loops, where 
\begin{equation*} 
    \ell(\alpha') = \ell(\alpha'') \leq \ell(\delta_1) = 2r_1. 
\end{equation*}
Note that the homotopy classes of $\alpha'$ and $\alpha''$ may be identical. We also mention that the figure eight geodesic can be split in another way so as to get a simple closed homologically trivial geodesic $\beta$.

We turn to Case B). Here the closures of two distinct disks, say $B_{r_1}(p_1)$ and $B_{r_1}(p_2)$ meet at some point $q$. By the same argument as before the two geodesic arcs of length $r_1$ from $p_1$ to $q$ and from $p_2$ to $q$ together form a geodesic arc $\delta_1$ from $p_1$ to $p_2$. In this case $\delta_1$ lifts to a homotopically non-trivial $\phi$-invariant simple closed geodesic $\alpha$ of length
\begin{equation*}
 \ell(\alpha) = 2 \ell(\delta_1) = 4r_1.
\end{equation*}
This completes the proof.
\end{proof}

\begin{thm}[Homology bases of hyperelliptic surfaces] Let $S$ be a hyperelliptic cn(0) surface of genus $g \geq 2$ and area $4\pi g$. Then there exist $\lceil  \frac{2g+2}{3}  \rceil$ geodesic loops  $(\alpha_k)_{k=1,\ldots,\left \lceil \frac{2g+2}{3}\right \rceil}$ that can be extended to a homology basis of $H_1(S,\Z)$ such that
\begin{equation*}
\ell(\alpha_{k}) \leq 4 \cdot \sqrt{\frac{ 12g}{ 2g+5-3k}} \text{ \ and \ } \frac{\ell(\alpha_{k})^2}{\area(S)} \leq  \frac{ 48}{ \pi(2g+5-3k)} \text{ for all } k=1, \ldots, \left\lceil \frac{2g+2}{3}\right \rceil.
\end{equation*}
\label{thm:hyper_section}
\end{thm}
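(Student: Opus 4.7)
The plan is to iterate the disk-growing argument of Step~1 in the proof of Theorem~\ref{thm:hyperell_systole}. Starting from the $2g+2$ Weierstrass points $p_1,\ldots,p_{2g+2}$ on $\Sigma=S/\phi$, we grow the disks $B_r(p_i)$ simultaneously from $r=0$. At each radius $r_k$ at which the next event (type~A: self-intersection of a single disk; or type~B: two distinct disks touching) occurs, we record the resulting geodesic loop or arc $\delta_k$ on $\Sigma$, lift it via the hyperelliptic covering $\Pi\colon S\to\Sigma$ to obtain $\alpha_k$ exactly as in Step~1, freeze the disks involved in that event, and continue growing the remaining ones until the next event.

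The length estimate will come from a packing count. The aim is to show that the freezing rule retires at most three Weierstrass-point disks per event, so that just before the $k$-th event at least $2g+2-3(k-1)=2g+5-3k$ disks are still growing. Since the frozen and growing disks are pairwise disjoint in $\Sigma$ and each has area at least $\pi r_k^2/2$ by~\eqref{eq:areaBhalf}, comparison with $\area(\Sigma)=2\pi g$ yields
\[
    (2g+5-3k)\,\tfrac{\pi r_k^2}{2} \;\leq\; 2\pi g, \quad\text{hence}\quad r_k^2 \;\leq\; \frac{4g}{2g+5-3k}.
\]
Combined with $\ell(\alpha_k)\leq 4r_k$ (Case~B produces an arc of length $2r_k$ between two Weierstrass points lifting to a $\phi$-invariant closed geodesic of length $4r_k$, and Case~A is shorter), this will give the length bound and the systolic-ratio bound of the statement, with the constant $\sqrt{12}$ affording some slack.

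The main obstacle is the combinatorial-topological step: showing that the lifts $\alpha_1,\ldots,\alpha_k$ produced this way are homologically independent in $H_1(S,\Z)$ and extend to a basis, while remaining compatible with the ``at most three frozen disks per event'' accounting. Here the key observation is that an arc on $\Sigma=S^2$ joining two Weierstrass points lifts via $\Pi$ to a closed curve on $S$, and that a forest of such arcs lifts to an almost-independent family of loops, with one dependence appearing for every cycle through Weierstrass points that is closed up. The delicate case is when a new event would produce a loop whose lift falls into the span of $\alpha_1,\ldots,\alpha_{k-1}$; the freezing rule must then additionally retire one further Weierstrass-point disk to preserve independence, and it is precisely this extra retirement that forces the factor $3$ in $2g+5-3k$ rather than the factor $2$ one would naively expect from counting only the disks directly hit by each event. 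The upper limit $k=\lceil(2g+2)/3\rceil$ is exactly the largest $k$ for which $2g+5-3k$ remains positive, and this is where the process terminates.
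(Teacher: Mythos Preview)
Your plan correctly identifies the iterative disk-growing as the source of the length bounds, and your packing inequality is the right one. However, the combinatorial-topological step is not what you describe, and your proposed mechanism for it does not work.

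The paper separates the proof into two independent parts. In Part~1 the disks are grown and at each event only the one or two disks directly involved are frozen; this produces $M$ arcs $\delta_1,\ldots,\delta_M$ with $g+1 \leq M \leq 2g+2$ and the bounds $\ell(\delta_m) \leq 2\sqrt{4g/(2g+2-j_{m-1})}$. No attempt is made at this stage to keep the lifts homologically independent. In Part~2 a purely topological pruning lemma, quoted from \cite{bmm}, is applied to the resulting embedded graph $\cH_M$ on $\Sigma$: one can always select a subgraph with $K=\lceil(2g+2)/3\rceil$ edges whose modified lift $\cH^\#$ has connected complement in $S$, and hence (by the non-separating criterion) forms a partial basis. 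The factor~$3$ in $2g+5-3k$ comes from this pruning, via $2g+2-j_{m_k-1} \geq K+1-k \geq (2g+5-3k)/3$, not from the freezing rule.

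Your alternative---freeze one further disk whenever the new lift would be dependent---has no clear meaning. Freezing an additional disk does not alter the arc $\delta_k$ just produced, so it cannot retroactively make its lift independent of the earlier $\alpha_i$. If instead you intend to \emph{discard} such a $\delta_k$ and continue, you have given no bound on how many consecutive events may be discarded, so the ``at most three retired disks per retained arc'' accounting is unsupported. Moreover, the criterion for the lifts to extend to a basis is not that the arcs avoid closing cycles on $\Sigma$; it is the Weierstrass point separation property (every complementary region of the graph on $\Sigma$ must contain a Jordan curve with an odd number of Weierstrass points on each side). This is the genuinely delicate point, and it is exactly what the pruning lemma from \cite{bmm} is designed to handle.
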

This theorem implies Theorem \ref{thm:hyper_intro} from the introduction.

\begin{proof}
The proof has two parts. In the first part we construct a collection of geodesic arcs on $\Sigma$ whose endpoints are Weierstrass points. This is carried out by estimating  areas of growing disks extending the procedure in the proof of Theorem \ref{thm:hyperell_systole}. The lifts of these arcs in $S$ form a family of simple closed curves, however, this family may not be extendable to a homology basis. In the second part of the proof we shall therefore restrict the family to a subset so as to make the extension possible. The second part uses only topological arguments.

\medskip

\emph{Part 1.}\ We let the common radius $r$ of the disks $B_r(p_i)$ grow but stepwise prevent certain disks from further growing. Step 1 has been carried out in the proof of Theorem \ref{thm:hyperell_systole} at the end of which either Case A) or Case B) has occurred for some radius $r = r_1$. In Case A) the open disk $B_{r_1}(p_1)$ is still embedded but its boundary has a tangential self intersection and we have detected a geodesic loop $\delta_1$ of length $2r_1$ with base point $p_1$. In this case we \emph{freeze} disk $B_{r_1}(p_1)$, meaning we now prevented it from growing further.

In Case B) two disjoint embedded open disks $B_{r_1}(p_1)$, $B_{r_1}(p_2)$ are detected whose boundaries intersect each other tangentially and a simple geodesic arc $\delta_1$ of length $2r_1$ from $p_1$ to $p_2$ is detected. In this case we freeze $B_{r_1}(p_1)$ and $B_{r_1}(p_2)$ i.e we prevent both of them from growing further.

We now continue the procedure letting the common radius of the non frozen disks grow beyond $r_1$ until again Case A) or Case B) occurs, at which moment we get a new geodesic arc, freeze the involved disks, and so on. If several disks happen to come to touch at the same time, we can choose an arbitrary order to turn such a case into a succession of individual steps with only one new arc in each. We detail the general step for $m \geq 2$.

\medskip

\textbf{Step \!$\boldsymbol{m}$:}  Let $j_{m-1}$ denote the number of all disks that are frozen at the end of Step\ $m{-}1$. If $j_{m-1} = 2g+2$ the procedure has come to halt at the end of the preceding step and we note $M:=m-1$. Otherwise, we expand the  remaining disks $(B_r(p_i))_{i= j_{m-1} +1 ,\ldots, 2g+2}$, letting the common radius $r$ grow further until again for some $r=r_m$ one of the following occurs:

\medskip
Case A)  \emph{the closure of a single disk comes to self-intersect.}

Case B)  \emph{the closures of two different disks come to intersect.}

\medskip
In Case A) we we set $j_m = j_{m-1}+1$, assume, by adapting the notation, that the center of the said disk is $p_{j_m}$, and get a simple geodesic loop $\delta_m$ of length $\ell(\delta_m) = 2r_m$ based at $p_{j_m}$, where $r_m$ is the radius currently reached. Disk $B_{r_m}(p_{j_m})$ gets frozen.

In Case B) for $r =r_m$ two subcases may occur: B1)  an expanding disk w.l.o.g. centered at $p_{j_{m-1}+1}$ touches an already frozen disk in which case we set $j_m = j_{m-1}+1$ and freeze $B_{r_m}(p_{j_m})$ or, B2) two expanding disks w.l.o.g. centered at $p_{j_{m-1}+1}$ and $p_{j_{m-1}+2}$ come to touch for $r=r_m$ in which case we set $j_m = j_{m-1}+2$ and freeze both, $B_{r_m}(p_{j_m -1})$ and $B_{r_m}(p_{j_m})$. In either case we get a simple geodesic arc $\delta_m$ from $p_{j_m}$  to $p_{j_m - k}$ for some $k$.

The procedure stops at the end of Step $M$ at which moment it has produced simple geodesic arcs and loops $\delta_1, \ldots, \delta_M$ that pairwise intersect each other at most at their endpoints, with $g+1 \leq M \leq 2g+2$.

To estimate the lengths of the $\delta_m$ we note that by \eqref{eq:areaBhalf} we have for the $2g+2 - j_{m-1}$ growing disks in Step\ $m$ at the moment when $r = r_m$,
\begin{equation*}
(2g+2 - j_{m-1})\frac{\pi r_m^2}{2} \leq \sum_{i=j_{m-1}+1}^{2g+2}\hspace{-1ex}\area(B_{r_m}(p_i)) \leq \area(\Sigma) = 2\pi g.
\end{equation*}
This yields
\begin{equation}             
 \frac{ \ell(\delta_m) }{2}  = r_m \leq \sqrt{\frac{ 4g }{ 2g+2-j_{m-1}} }, \quad m=1,\ldots, M
\label{eq:deltam}
\end{equation}

(with $j_0=0$). We finish Part 1 by analysing the combinatorial properties of the graph formed by $\delta_1, \ldots, \delta_M$. More precisely, for any $m \in \{1, \ldots, M\}$, we let $\cH_m$ be the graph embedded in the sphere $\Sigma$ whose vertex set is the set of Weierstrass points $p_1, \ldots, p_{2g+2}$ and the edges are $\delta_1, \ldots, \delta_m$. In the following \textit{loop} means closed edge. 

\begin{lem} Each $\cH = \cH_m$ has the following properties.
\begin{enumerate}
\item[(i)] $\cH$ is embedded in $\Sigma$.
\item[(ii)] The vertices of $\cH$ are the Weierstrass points $p_1, \ldots, p_{2g+2}$.
\item[(iii)] For any loop $\lambda$ of $\cH$ each of the two open components of $\Sigma \setminus \lambda$ contains at least one vertex of $\cH$.
\item[(iv)] Each connected component of $\cH$ is one of the following
\begin{itemize}
\item[--] an isolated vertex,
\item[--] a non-trivial tree,
\item[--] an isolated loop,
\item[--] a non-trivial tree with exactly one loop attached to it.
\end{itemize}
\end{enumerate}
\label{lem:graph_nooroneloop}
\end{lem}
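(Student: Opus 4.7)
My plan is to prove (i)--(iv) by induction on $m$, with base case $m=0$ (the edgeless graph on all $2g+2$ Weierstrass vertices) being trivial. At the inductive step I assume $\cH_{m-1}$ satisfies (i)--(iv) and analyse the effect of adjoining $\delta_m$. Properties (i) and (ii) follow directly from the construction: $\delta_m$ is introduced as a simple geodesic arc or loop with endpoints at Weierstrass points, and at the critical moment when it is drawn the involved open disks are embedded and disjoint from all the previously frozen open disks (those having been frozen at strictly smaller radii, and the open disks being pairwise disjoint at every moment of the procedure); this disjointness prevents $\delta_m$ from meeting any earlier $\delta_k$ outside a shared Weierstrass endpoint.

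For (iv) the key combinatorial observation is that every Weierstrass point that becomes an endpoint of $\delta_m$ but is not yet an endpoint of any earlier $\delta_k$ is a newly frozen point, and is therefore an isolated vertex of $\cH_{m-1}$ (the edges of $\cH_{m-1}$ have both endpoints among already frozen vertices). In Case A, $\delta_m$ is a loop at the newly frozen isolated vertex $p_{j_m}$, so that component in $\cH_m$ is an isolated loop. In Case B1, $\delta_m$ is a pendant edge attached to the component of $\cH_{m-1}$ containing the already frozen endpoint; this turns an isolated vertex into a two-vertex tree, preserves any tree, turns an isolated loop into a tree-with-one-loop, and preserves any tree-with-one-loop. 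In Case B2, $\delta_m$ joins two previously isolated vertices, creating a length-one tree. No case introduces a second loop into a component or produces a graph outside the listed types.

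The heart of the proof is (iii), and here I would use the hyperelliptic double cover $\Pi\colon S\to\Sigma$. Only a newly created loop $\lambda=\delta_k$ needs to be examined, since for arcs of $\cH_m$ or for loops already present in $\cH_{m-1}$ the statement is unchanged by the inductive step. At the moment $\lambda$ is drawn, the open disk $B_{r_k}(p_{j_k})$ is embedded and every other open disk is disjoint from its closure, as otherwise Case B would have been triggered first. Since $\lambda\subset\overline{B_{r_k}(p_{j_k})}$, this forces $p_{j_k}$ to be the only Weierstrass point on $\lambda$. Consequently $\Pi^{-1}(\lambda)$ is a figure-eight geodesic self-intersecting at $p_{j_k}^*$, whose two lobes are the simple closed non-separating geodesics $\alpha',\alpha''$ produced by the lifting construction of Theorem \ref{thm:hyperell_systole}. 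Suppose for contradiction that a component $U$ of $\Sigma\setminus\lambda$ contains no Weierstrass point. Then $\Pi^{-1}(U)\to U$ is an unbranched double cover of a topological disk and hence trivial, so $\Pi^{-1}(U)=V_1\sqcup V_2$ with each $V_i$ an open disk of $S$ mapping homeomorphically to $U$. The boundary of $V_i$ in $S$ is a simple closed curve contained in $\Pi^{-1}(\lambda)$ and therefore coincides with $\alpha'$ or $\alpha''$; but then this lobe bounds the disk $V_i$ in $S$ and is separating, contradicting the previous sentence.

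The step I expect to be the main obstacle is justifying that the non-separability of the two lobes applies to every Case-A loop $\delta_k$ in the procedure, not only to the very first one of Theorem \ref{thm:hyperell_systole}. The lifting argument from \cite{mu1}, however, is local around the Weierstrass base point and purely geometric, so it applies verbatim to any geodesic loop of $\Sigma$ based at a Weierstrass point; this is where I would point the reader when writing up the details.
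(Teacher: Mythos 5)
Your treatment of (i), (ii), and (iv) follows the same inductive strategy as the paper and is sound: the key point you identify — that any endpoint of $\delta_m$ that was not already an endpoint of some earlier $\delta_k$ is an isolated vertex of $\cH_{m-1}$ — is exactly what the paper uses, and your case analysis (isolated loop in Case A, pendant edge in Case B1, two-vertex tree in Case B2) matches the paper's list.

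The problem is (iii). The paper disposes of it in one line by Gauss--Bonnet: if a component $U$ of $\Sigma \setminus \lambda$ contained no Weierstrass point, then $U$ would be a disk on which $K \le 0$, bounded by a geodesic loop whose corner angles (one at $p_{j_m}$, one at $q$ with $\theta_q \geq \pi$) make $\int_U K + (\pi-\theta_{p}) + (\pi - \theta_q) = 2\pi$ impossible. Your route through the hyperelliptic double cover is, as you anticipate yourself, exactly where the argument breaks: the fact that the two lobes $\alpha'$, $\alpha''$ of the figure-eight lift are non-separating is \emph{not} a local statement about the Weierstrass base point. If $U$ contains no Weierstrass point, then, precisely as your own construction shows, $\Pi^{-1}(U)=V_1 \sqcup V_2$ consists of two disks and the two lobes are tangent (not transversal) at $p^*_{j_m}$, each bounding one $V_i$ — so in that situation $\alpha'$ and $\alpha''$ \emph{are} separating (indeed nullhomotopic). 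The non-separating assertion in the paper's Theorem \ref{thm:hyperell_systole} is therefore itself equivalent to the very fact you are trying to prove, and its standard justification is the same Gauss--Bonnet count. So your argument for (iii) is circular, and the ``local and purely geometric'' claim about \cite{mu1} that you flag as a possible obstacle is in fact false: \cite{mu1}'s lifting argument gives the figure-eight, but the non-separating property of the two lobes is a global consequence of how the Weierstrass points are distributed on the two sides of $\lambda$. Replace the lifting argument for (iii) by the direct Gauss--Bonnet inequality on the disk $U$, and the proof is complete.
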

Here \emph{non-trivial} means that the tree has at least one edge. In what follows, \emph{any} graph $\cH$ with properties (i) - (iv) will be called a \emph{graph of type W} (the $W$ alludes to the Weierstrass points).

\begin{proof} Properties (i) and (ii) hold by the definition of the $\cH_m$. Property (iii) follows from the Gauss-Bonnet theorem since the curvature of the metric we are dealing with is $\leq 0$ away from the Weierstrass points. For the properties of the components in (iv) we proceed by induction on the stepwise search process of the $\delta_m$.

At the end of Step 1 we have graph $\cH_1$ whose components are vertices plus one edge that is either closed or connects distinct vertices.

Now assume that the lemma holds for the graph $\cH_{m-1}$ which is given at the beginning of Step\ $m$. Let $\delta_m$ be the new geodesic arc obtained at this step. By construction the initial point $p_{j_m}$ of $\delta_m$ is a Weierstrass point to which no edge of $\cH_{m-1}$ is attached. For the other endpoint of $\delta_m$  there are two possibilities: 1.) it coincides with $p_{j_m}$ in which case $\delta_m$ together with $p_{j_m}$  is a component of $\cH_{m}$, or 2.) $\delta_m$ connects $p_{j_m}$ with a component $\cT$ of $\cH_{m-1}$ in which case $\cT' := \cT + \delta_m + p_{j_m}$ is a tree in case $\cT$ is an isolated vertex or a tree, respectively a tree  with exactly one closed attached edge in case $\cT$ is a loop or a tree with an attached loop. Hence, $\cH_m$ too satisfies (i) -- (iv).
\end{proof} 
We add that, by construction, the final graph $\cH_M$
has no isolated vertices.

\bigskip

\emph{Part 2.}\ The lifts of $\delta_1, \ldots, \delta_M$ to $S$ under the hyperelliptic covering $\Pi : S \to \Sigma$ are generally not extendable to a basis of $H_1(S,\Z)$. We therefore remove certain parts to remedy this. The arguments make no use of metric geometry. They are carried out in \cite{bmm} and we just mention the main steps for clarity.

Two types of removals come into play. One is to remove edges from $\cH_M$, the other takes place on $S$ and is as follows: Let $\cH$ be a subgraph of $\cH_M$ obtained by removing some edges but leaving the vertex set unchanged. Then $\cH$ is again of type $W$. The inverse image $\cH^*$ of $\cH$ under the covering map $\Pi : S \to \Sigma$ consists of simple closed curves (the lifts of the edges of $\cH$ that are not loops) and figure eight shaped curves (the lifts of the loops of $\cH$). We now modify $\cH^*$ by removing from each figure eight shape one of the two simple loops it consists of (it does not matter which). The thus modified lift shall be denoted by $\cH^\#$. It consists of simple closed curves that pairwise intersect each other in at most one point.

The goal is now to remove certain edges from $\cH_M$, as few as possible, so that $\cH^\#$ extends to a basis. For concise expression we say that closed curves $a_1, \ldots, a_n$ form a \emph{partial basis} if their homology classes $[a_1], \ldots, [a_n]$ can be extended to a homology basis. The following sufficient criterion is used \cite[Lemma 3.3]{bmm}:
\begin{lem}
Let $a_1, \ldots, a_{n}$ be distinct simple closed curves on a compact orientable surface $F$ of genus $g \geq 1$. If $F \smallsetminus (a_1 \cup \ldots \cup a_{n})$ is connected then the curves form a partial basis of $\smash H_1(F,\Z)$.
\label{lem:rank_n}
\end{lem}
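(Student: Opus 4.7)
The plan is to build, for each $j \in \{1, \ldots, n\}$, a closed loop $b_j$ on $F$ that meets $a_j$ transversally in exactly one point and is disjoint from every $a_i$ with $i \neq j$. Given such dual loops, the $n \times n$ intersection matrix $\bigl([a_i] \cdot [b_j]\bigr)$ is diagonal with entries $\pm 1$, hence has determinant $\pm 1$. A short argument using the unimodularity of the intersection form on $H_1(F, \Z)$ will then yield both the $\Z$-linear independence of $[a_1], \ldots, [a_n]$ and the primitivity of the sublattice they generate, which together are exactly what it means to be extendable to a $\Z$-basis.

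For the construction of the dual loop $b_j$, write $A = a_1 \cup \ldots \cup a_n$ and, after a preliminary transversality perturbation if needed, pick a short transverse arc $\gamma_j$ crossing $a_j$ exactly once at some interior point of $a_j$ that lies on no other $a_i$. Its two endpoints $x, y$ then lie on opposite local sides of $a_j$ in the open set $F \setminus A$. The hypothesis that $F \setminus A$ is connected (hence path-connected, being open in a manifold) allows us to join $y$ back to $x$ by a path $\eta_j$ entirely contained in $F \setminus A$. The concatenation $b_j := \gamma_j \cdot \eta_j$ is then a closed loop on $F$ whose algebraic intersection with $a_j$ is $\pm 1$ and which is disjoint from every $a_i$ with $i \neq j$.

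Finally, I would verify the lattice-theoretic consequences. Linear independence: a relation $\sum c_i [a_i] = 0$ paired with $[b_j]$ yields $\pm c_j = 0$ for every $j$, so all $c_j$ vanish. Primitivity: if $k [v] = \sum c_i [a_i]$ for some $[v] \in H_1(F, \Z)$ and integer $k \geq 1$, pairing with $[b_j]$ gives $k \bigl([v] \cdot [b_j]\bigr) = \pm c_j$, so $k$ divides every $c_j$; writing $c_j = k c'_j$ and using that $H_1(F, \Z)$ is torsion-free, we conclude $[v] = \sum c'_j [a_j]$ already lies in the sublattice. The only real obstacle in the plan is the dual-curve construction itself: one has to make sure the transverse arc $\gamma_j$ can be taken small enough that its interior avoids $A$ and that its two endpoints genuinely sit on different local sides of $a_j$. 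This is a routine local transversality argument for embedded simple closed curves on an orientable surface, so no deeper difficulty arises.
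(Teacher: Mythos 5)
Your proof is correct, and it is worth noting that the paper itself does not prove this lemma: it is quoted verbatim from [BMM, Lemma~3.3] and invoked as a black box. So a genuine comparison with ``the paper's own proof'' is not possible here; what you have produced is a correct self-contained argument via the standard Poincar\'e-dual route. The core of the argument is exactly right: for each $j$, pick a point of $a_j$ off the other curves, cross $a_j$ there by a short transverse arc, and use the connectedness of $F \setminus A$ to close it up into a loop $b_j$ disjoint from $a_i$ ($i\neq j$) and meeting $a_j$ once; the resulting intersection matrix $\bigl([a_i]\cdot[b_j]\bigr)$ is diagonal with $\pm1$ entries, which gives independence and saturation of the sublattice $\langle [a_1],\ldots,[a_n]\rangle$, hence extendability to a $\Z$-basis because a saturated sublattice of $\Z^{2g}$ is a direct summand.

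Two small points worth tightening. First, as stated the lemma does not say the $a_i$ intersect transversally or even in finitely many points, and your phrase ``after a preliminary transversality perturbation'' quietly supplies that hypothesis. This is harmless, both because homology classes and the connectedness of the complement are stable under a small isotopy, and because in the actual application of the lemma (to the curve system $\cH^\#$) the curves are geodesic and pairwise intersect in at most one point, so no perturbation is needed at all. But one should say explicitly that a point of $a_j$ off $\bigcup_{i\neq j}a_i$ exists: with finitely many transverse intersections this set is a nonempty open subset of $a_j$, and the chosen crossing point has positive distance to $A\setminus a_j$, so the transverse arc $\gamma_j$ can be taken short enough that its interior lies in $F\setminus A$ and its two endpoints lie in $F\setminus A$ on opposite local sides of the $2$-sided curve $a_j$. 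Second, your opening sentence appeals to ``unimodularity of the intersection form,'' but your actual argument never uses unimodularity --- the explicit dual curves $b_j$ already do all the work --- so that remark can be dropped.
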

It suffices therefore to reduce $\cH_M$ to $\cH$ in such a way that $S \setminus \cH^\#$ is connected. For this we have another criterion \cite[Lemma 3.2]{bmm}:
\begin{lem} $\cH^\#$ in $S$ is non-separating if and only if any open connected component of $\Sigma \smallsetminus \cH$ admits a Jordan curve $\mathcal{J}$ that separates $p_1, \ldots, p_{2g+2}$ into two odd subsets i.e.\ the number of Weierstrass points on either side of $\mathcal{J}$ is odd.
\label{lem:lift_nonsep}
\end{lem}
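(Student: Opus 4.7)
The plan is to translate the odd-separation condition into the language of the monodromy of the branched double cover $\Pi\colon S \to \Sigma$ and then analyse combinatorially how $\cH^\#$ cuts $S$. Since $\Pi$ is branched precisely at the $2g+2$ Weierstrass points and $2g+2$ is even, a Jordan curve $\mathcal{J}$ inside a face $F$ of $\Sigma \setminus \cH$ lifts to a connected curve in $S$ if and only if it encloses an odd number of Weierstrass points on one (hence on either) side. Thus the condition of the lemma is equivalent to: the monodromy homomorphism $\pi_1(F) \to \Z/2$ induced by $\Pi$ is non-trivial for every face $F$ of $\Sigma\setminus\cH$.

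Because $\cH$ contains every branch point, the restriction $\Pi\colon S \setminus \cH^* \to \Sigma \setminus \cH$ is an unbranched double cover, and each connected component of $S \setminus \cH^*$ projects onto exactly one face $F$: it is either the unique connected double cover of $F$ (when the monodromy on $\pi_1(F)$ is non-trivial) or one of the two trivial sheets over $F$ (when it is trivial). The passage from $\cH^*$ to $\cH^\#$ removes one of the two simple loops of each figure-eight $\Pi^{-1}(\delta)$ over a loop edge $\delta$ of $\cH$; locally near the lifted Weierstrass point $p^*$ this merges precisely two pairs of the four sectors cut out by the figure-eight. I would encode this with an auxiliary graph $\mathcal{G}$ whose vertices are the components of $S \setminus \cH^*$ and whose edges, one per removed half-loop, record the mergings; then $S \setminus \cH^\#$ is connected if and only if $\mathcal{G}$ is connected.

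For the $(\Leftarrow)$ direction, assume every face of $\Sigma\setminus\cH$ has non-trivial monodromy. Then $\mathcal{G}$ has one vertex per face, and I would verify, using the type-$W$ structure from Lemma \ref{lem:graph_nooroneloop}, that the edges of $\mathcal{G}$ coming from removed half-loops together with the sheet-gluing structure across non-loop edges of $\cH$ produce a connected graph on the dual of $\cH$. The main obstacle is the $(\Rightarrow)$ direction, where the contrapositive asks, given a face $F_0$ with trivial monodromy, to exhibit a locally constant, non-trivial $\Z/2$-labeling of $S \setminus \cH^\#$. My plan is to label the two sheets over $F_0$ by $0$ and $1$ and to propagate this labeling across each edge of $\cH$; the type-$W$ property is crucial, because it guarantees that each component of $\cH$ carries at most one cycle, so the propagation along non-loop edges is unobstructed and the single possible loop per component must be reconciled only once with the choice of its removed half. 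The compatibility check that closes the argument is that the removed half-loop of any figure-eight joins sectors of equal label, which is forced by the parity accounting matching the $\Z/2$-monodromy of the loop with the label difference between the two faces that it separates.
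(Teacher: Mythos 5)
The paper does not prove this lemma: it is quoted verbatim from \cite[Lemma~3.2]{bmm}, so there is no in-paper argument to compare against. Judged on its own, your sketch starts from the right place---translating the Weierstrass-point condition into non-triviality of the $\Z/2$-monodromy on each face of $\Sigma\setminus\cH$, and reducing connectivity of $S\setminus\cH^\#$ to connectivity of an auxiliary graph $\mathcal{G}$ whose vertices are the components of $S\setminus\cH^*$ and whose edges record the merges created by the removed half-loops. Those reductions are sound. But both directions are then left as assertions, and in each there is a concrete gap.

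For $(\Leftarrow)$ you appeal to the ``sheet-gluing structure across non-loop edges of $\cH$,'' but non-loop edges lift entirely into $\cH^\#$ and contribute no edges to $\mathcal{G}$ at all; the only merges come from the loops of $\cH$. The fact you actually need, and which you do not state, is that the type-$W$ property forces the loops of $\cH$ to be pairwise disjoint simple closed curves in the sphere $\Sigma$ (at most one loop per component, different components disjoint). The dual graph of such a configuration of circles is a tree, and when every face has non-trivial monodromy this dual tree is exactly $\mathcal{G}$; that is why $\mathcal{G}$ is connected. For $(\Rightarrow)$ the ``propagation across each edge of $\cH$'' is not well-posed: a labeling of the sheets of $S\setminus\cH^*$ propagated along a path in $\Sigma\setminus\cH$ changes by the monodromy of the loop formed by two such paths, which is precisely what you cannot control; and the claim that the removed half-loop joins equally-labeled sectors is the crux that is asserted rather than proved. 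A workable replacement is homological: take a component $\tilde F_0$ over the trivial-monodromy face $F_0$; its $\Z/2$-boundary $\partial\tilde F_0$ is a non-zero null-homologous cycle supported in $\cH^*$; replace each removed half-loop occurring in it by its image under $\phi$ (the kept half-loop), which lies in $\cH^\#$ and is $\Z/2$-homologous to it because the hyperelliptic involution acts as $-1$, hence as the identity, on $H_1(S;\Z/2)$. The result is a cycle in $\cH^\#$ that is still null-homologous in $S$, and after checking it is non-zero one concludes $\cH^\#$ separates. Without this (or an equivalent) argument, the $(\Rightarrow)$ direction is not established.
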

The property expressed in this lemma shall be called the \emph{Weierstrass point separation property}.
It thus remains to delete edges from $\cH_M$ so that the resulting graph $\cH$ has this property. This is carried out by a pruning algorithm in \cite{bmm} which we do not reproduce here but merely state its outcome in form of a lemma. At one point in the description of the algorithm in \cite{bmm} a curvature argument is made to show that on either side of any loop of $\cH_M$ on $\Sigma$ there lies at least one Weierstrass point. Here we have shifted this property to item (iii) of Lemma \ref{lem:graph_nooroneloop} above and included it in the definition of a graph of type $W$ so as to free the next lemma from metric hypotheses.

The term ``edges'' in the lemma englobes open and closed ones.
\begin{lem} Let $\cG$ be a graph of type $W$ embedded in the sphere $\Sigma$ with 2g+2 vertices and assume that none of the vertices is isolated. Then $\cG$ contains a subgraph $\cH$ of type $W$ with $\left\lceil \frac{2g+2}{3}\right \rceil$ edges that has the Weierstrass point separation property.
\label{lem:frompruning}
\end{lem}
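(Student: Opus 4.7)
The plan is to prove the lemma by a pruning algorithm that removes edges of $\cG$ one at a time, preserving the type-$W$ structure and terminating at exactly $\lceil \frac{2g+2}{3}\rceil$ edges with the Weierstrass point separation property. The argument is purely combinatorial-topological; since Lemma \ref{lem:graph_nooroneloop}(iii) has absorbed the metric ``vertex on each side of a loop'' condition into the definition of type $W$, the hyperbolic pruning algorithm of \cite[Sec.~3]{bmm} carries over verbatim to the cn(0) setting.

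The first preparatory observation is that any edge-subgraph $\cH \subseteq \cG$ retaining the full vertex set is automatically of type $W$: condition (i) is inherited from the embedding, (ii) is unchanged, (iii) uses only the fixed vertex set and the loops that remain, and (iv) survives edge deletion because a tree, an isolated loop, or a tree-with-loop component can only split into smaller components of the four allowed kinds. Consequently the problem reduces to choosing a set of $\lceil \frac{2g+2}{3}\rceil$ edges of $\cG$ that induces the separation property.

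Next I would recast the separation property face by face. A face $D$ of $\cH$ admits a Jordan curve partitioning the $2g+2$ Weierstrass points into two odd subsets iff either $D$ contains an isolated vertex of $\cH$ (a tiny circle around it produces the partition $1+(2g+1)$, both odd), or $D$ is multiply connected and at least one of its boundary components encloses an odd number of Weierstrass points. The pruning then proceeds in order of priority: delete pendant (leaf) edges first, since each such deletion creates a new isolated vertex without altering face topology; delete loop edges only when the merging of the two faces on either side of the loop leaves a separating witness on both; and delete internal tree edges last, using them to tune vertex-count parities across boundary components. The target $\lceil \frac{2g+2}{3}\rceil$ is natural since each retained edge consumes two endpoints, so at least $2g+2 - 2\lceil \frac{2g+2}{3}\rceil \geq 1$ vertices (for $g \geq 2$) remain isolated, enough to seed a separation witness in the canonical configuration.

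The main obstacle is the case analysis verifying that a legal removal always exists while the edge count strictly exceeds the target, and that the isolated vertices of the final $\cH$ distribute across all of its faces. The delicate case is when $\cG$ has many small loop-components, since each retained loop creates an extra face that must carry its own separating witness; the algorithm must then choose carefully which loops to keep and which to delete, possibly retaining a loop in lieu of deleting a tree edge when this is necessary to balance parities. These bookkeeping steps are spelled out in the pruning algorithm of \cite[Sec.~3]{bmm} and apply directly here, with the loop property already encoded into the type-$W$ definition so that no metric input is required.
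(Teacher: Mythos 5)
Your proposal takes essentially the same route as the paper: the paper does not prove this lemma internally, but cites the pruning algorithm of \cite[Sec.~3]{bmm} and observes that the curvature hypothesis used there has been absorbed into type-$W$ condition (iii), exactly as you do. The extra sketch you supply (the face-by-face recasting of the separation property, the pruning priority order, and the heuristic for the $\lceil (2g+2)/3\rceil$ count) goes a bit beyond what the paper includes, but since the paper explicitly elects to state the lemma without reproducing the \cite{bmm} algorithm, there is no real divergence in approach.
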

We now apply the lemma to $\cG = \cH_M$, where we recall that $\cH_M$ has no isolated vertices. The edges $\delta_m$ of $\cH_M$, for $m=1, \ldots, M$,  have the length bounds \eqref{eq:deltam}. Lemma \ref{lem:frompruning} yields a selection
\begin{equation*}
\delta_{m_1},  \ldots, \delta_{m_K},  \quad K:= \left\lceil \frac{2g+2}{3}\right \rceil,
\end{equation*}
that form a subgraphs $\cH$ of type $W$ that has the Weierstrass points separation property. We chose the order of the indices $m_1, \ldots, m_K$ in such a way that
\begin{equation*}
j_{m_1-1} < j_{m_2-1} < \ldots < j_{m_K-1} < 2g+2.
\end{equation*}
We then have 
\begin{equation*}
j_{m_k-1} \leq 2g-K+1+k, \quad k=1,\ldots,K.
\end{equation*}
By \eqref{eq:deltam} and the definition of $K$ we get
\begin{equation*}
\ell(\delta_{m_k}) \leq 2 \sqrt{\frac{4g}{K+1-k}} \leq 2 \sqrt{\frac{12g}{2g+5-3k}}, \quad k = 1, \ldots, \left\lceil \frac{2g+2}{3}\right \rceil.
\end{equation*}
The lifts of $\delta_{m_1}, \ldots, \delta_{m_K}$ in $\cH^\#$ have at most twice the lengths and by Lemma \ref{lem:rank_n} and \ref{lem:lift_nonsep} they form a partial basis. This completes the proof.
\end{proof}

\vspace{1cm}

\noindent Peter Buser \\
\noindent Department of Mathematics, Ecole Polytechnique F\'ed\'erale de Lausanne\\
\noindent Station 8, 1015 Lausanne, Switzerland\\
\noindent e-mail: \textit{peter.buser@epfl.ch}\\
\\
\\
\noindent Eran Makover\\
\noindent Department of Mathematics, Central Connecticut State University\\
\noindent 1615 Stanley Street, New Britain, CT 06050, USA\\
\noindent e-mail: \textit{makovere@ccsu.edu}\\
\\
\\
\noindent Bjoern Muetzel \\
\noindent Department of Mathematics, Eckerd College \\
\noindent 4200 54th avenue South, St. Petersburg, FL 33711, USA\\
\noindent e-mail: \textit{bjorn.mutzel@gmail.com}\\

\end{document}